\pdfoutput=1
\RequirePackage{ifpdf}
\ifpdf 
\documentclass[pdftex]{sigma}
\else
\documentclass{sigma}
\fi

\numberwithin{equation}{section}

\newtheorem{Theorem}{Theorem}[section]

\newtheorem{lem}[Theorem]{Lemma}
\newtheorem{prop}[Theorem]{Proposition}
 { \theoremstyle{definition}
\newtheorem{dfn}[Theorem]{Definition}

\newtheorem{rem}[Theorem]{Remark} }
\newtheorem{dfnprop}[Theorem]{Definition and Proposition}

\usepackage{mathrsfs}

\newcommand{\cd}{\cdot}
\newcommand{\hot}{\widehat \otimes}
\newcommand{\op}{\oplus}
\newcommand{\ti}{\times}
\newcommand{\nn}{\mathbb{N}}
\newcommand{\zz}{\mathbb{Z}}
\newcommand{\rr}{\mathbb{R}}

\newcommand{\be}{\beta}
\newcommand{\ga}{\gamma}
\newcommand{\De}{\Delta}
\newcommand{\io}{\iota}
\newcommand{\la}{\lambda}
\newcommand{\si}{\sigma}
\newcommand{\ov}{\overline}
\newcommand{\C}[1]{\mathcal{#1}}
\newcommand{\s}[1]{\mathscr{#1}}
\newcommand{\B}[1]{\mathbb{#1}}
\newcommand{\su}{\subseteq}
\newcommand{\q}{\qquad}
\newcommand{\wit}{\widetilde}
\newcommand{\inn}[1]{\langle #1 \rangle}
\newcommand{\sem}{\setminus}
\newcommand\sA{\mathscr{A}}

\begin{document}
\allowdisplaybreaks

\newcommand{\arXivNumber}{1901.05161}

\renewcommand{\thefootnote}{}

\renewcommand{\PaperNumber}{082}

\FirstPageHeading

\ShortArticleName{On the Unbounded Picture of $KK$-Theory}

\ArticleName{On the Unbounded Picture of $\boldsymbol{KK}$-Theory\footnote{This paper is a~contribution to the Special Issue on Noncommutative Manifolds and their Symmetries in honour of Giovanni Landi. The full collection is available at \href{https://www.emis.de/journals/SIGMA/Landi.html}{https://www.emis.de/journals/SIGMA/Landi.html}}}

\Author{Jens KAAD}

\AuthorNameForHeading{J.~Kaad}

\Address{Department of Mathematics and Computer Science, The University of Southern Denmark,\\ Campusvej 55, DK-5230 Odense M, Denmark}
\Email{\href{mailto:kaad@imada.sdu.dk}{kaad@imada.sdu.dk}}
\URLaddress{\url{https://portal.findresearcher.sdu.dk/en/persons/kaad}}

\ArticleDates{Received October 22, 2019, in final form August 05, 2020; Published online August 22, 2020}

\Abstract{In the founding paper on unbounded $KK$-theory it was established by Baaj and Julg that the bounded transform, which associates a class in $KK$-theory to any unbounded Kasparov module, is a surjective homomorphism (under a separability assumption). In this paper, we provide an equivalence relation on unbounded Kasparov modules and we thereby describe the kernel of the bounded transform. This allows us to introduce a notion of topological unbounded $KK$-theory, which becomes isomorphic to $KK$-theory via the bounded transform. The equivalence relation is formulated entirely at the level of unbounded Kasparov modules and consists of homotopies together with an extra degeneracy condition. Our degenerate unbounded Kasparov modules are called spectrally decomposable since they admit a decomposition into a part with positive spectrum and a part with negative spectrum.}

\Keywords{$KK$-theory; unbounded $KK$-theory; equivalence relations; bounded transform}

\Classification{19K35; 58B34}

\begin{flushright}
\begin{minipage}{70mm}
\it Dedicated to Gianni Landi on the occasion\\ of his 60th birthday
\end{minipage}
\end{flushright}

\renewcommand{\thefootnote}{\arabic{footnote}}
\setcounter{footnote}{0}

\section{Introduction}
$KK$-theory, as introduced by Kasparov in \cite{Kas:TIE,Kas:OFE}, has its roots in the Brown--Douglas--Fillmore extension theory of commutative $C^*$-algebras~\cite{BDF:ECK}, and in Atiyah's axiomatization of properties of elliptic operators on manifolds, \cite{Ati:GEO}. But $KK$-theory extends far beyond the context of commutative $C^*$-algebras and has become an important tool for accessing the algebraic topology of $C^*$-algebras with applications ranging from Elliott's classification program to key aspects of index theory.

In practice, explicit classes in $KK$-theory often come from unbounded operators acting on Hilbert $C^*$-modules and these unbounded operators constitute the main ingredient in the unbounded picture of $KK$-theory. The cycles in the unbounded picture are called unbounded Kasparov modules and are often of a differential geometric origin with prototypical examples being Dirac operators (in the case of $K$-homology) or multiplication operators by symbols of Dirac operator (in the case of $K$-theory). In the unbounded picture, the relationship between $KK$-theory and the program of Connes on noncommutative geometry is in fact immediate: spectral triples are, without any further modifications, examples of unbounded Kasparov modules~\cite{Con:NCG,Con:GCM}.

The passage from the unbounded picture to the more commonly encountered bounded picture of $KK$-theory is furnished by the bounded transform which turns an unbounded Kasparov module into a class in $KK$-theory via the smooth approximation of the sign function given by $t \mapsto t\big(1 + t^2\big)^{-1/2}$ (and the functional calculus on Hilbert $C^*$-modules). The richness of the unbounded picture is witnessed by a fundamental theorem of Baaj and Julg stating that any class in $KK$-theory comes from an unbounded Kasparov module so that the bounded transform is in fact a surjection (under a mild separability condition)~\cite{BaJu:TBK}. See also \cite{Kuc:LIK,MeRe:NST} for other interesting and related lifting results.

In this paper, we construct an equivalence relation on unbounded Kasparov modules which captures the kernel of the bounded transform and this equivalence relation is ``geometric'' in the sense that it can be formulated without any reference to the bounded picture of $KK$-theory. Our equivalence relation relies on an extra degeneracy condition on unbounded Kasparov modules together with a notion of homotopies using families of unbounded Kasparov modules parametrized by the unit interval. Our degeneracy condition on an unbounded Kasparov module is formulated in terms of a spectral decomposition of the unbounded operator in question building on the simple observation that the phase of an unbounded selfadjoint and regular operator with strictly positive spectrum is equal to the identity operator. It was pointed out to us by the referee that spectral decomposability is related to the concept of weak degeneracy from \cite[Definition~3.1]{DGM:BGU} and that it can be formulated alternatively as a condition on the bounded transform. We are of course grateful for these comments.

In summary, we introduce a notion of topological unbounded $KK$-theory and show that topological unbounded $KK$-theory is isomorphic to $KK$-theory via the bounded transform. We hereby give an affirmative answer to the question raised by Deeley, Goffeng, and Mesland on page $3$ in~\cite{DGM:BGU}. In fact, it turns out that our topological unbounded $KK$-theory is independent of the choice of a dense $*$-subalgebra of a $C^*$-algebra as long as this $*$-subalgebra is countably generated. It would thus be interesting to investigate the relationship between topological unbounded $KK$-theory and the bordism group introduced in~\cite{DGM:BGU}, where the equivalence relation comes from Hilsum's notion of bordisms of unbounded Kasparov modules~\cite{Hil:BIK}.

The idea for proving the injectivity of the bounded transform is to apply the lifting procedure introduced by Baaj and Julg to a homotopy at the bounded level, where the homotopic elements are bounded transforms of some given unbounded Kasparov modules. The problem with this idea is that it might very well happen that the unbounded homotopy achieved from this process connects two unbounded Kasparov module that are very different from the original ones. To see what might happen, notice that an unbounded Kasparov module could satisfy that commutators extend to \emph{bounded} operators for all elements in a dense subalgebra whereas a Baaj--Julg lift can always be chosen such that commutators extend to \emph{compact} operators for all elements in a (perhaps different) dense subalgebra. In this paper, we resolve this problem by studying the concept of a spectrally decomposable unbounded Kasparov module which provides a type of degenerate elements at the unbounded level, related to the idea that a strictly positive unbounded operator should not contain any topological information.

After this paper was written and made available on the arXiv a strengthening of our results was obtained by van den Dungen and Mesland~\cite{DuMe:HEU}. Among other things these authors were able to prove that a spectrally decomposable unbounded Kasparov module is in fact null-homotopic at the unbounded level, see~\cite[Corollary~4.9]{DuMe:HEU}. Notably, using ideas related to \cite{BaJu:TBK,Kuc:LIK,MeRe:NST}, van den Dungen and Mesland also establish a lifting result regarding homotopies of Kasparov modules which is stronger than the lifting results applied in the present text, see~\cite[Theorem~2.9]{DuMe:HEU}.

We emphasize that the word topological is a keyword in connection with our definition of unbounded $KK$-theory. In other approaches to unbounded $KK$-theory, the aim is to find an interesting equivalence relation which captures geometric content at the level of unbounded Kasparov modules while still admitting explicit formulae for the interior Kasparov product. The geometric content which could be valuable in this respect relates to the asymptotic behaviour of eigenvalues and the spectral metric aspects of noncommutative geometry. Certainly, this geometric content is invisible from a topological point of view and thus in particular from the point of view of topological unbounded $KK$-theory. The delicate questions on the geometric nature of unbounded $KK$-theory are part of ongoing research on the unbounded Kasparov product and the interested reader can consult the following (incomplete list of) references: \cite{BrMeSu:GSU,Kaa:MEU,KaLe:SFU,KaSu:FDT, Kuc:PUM,Kuc:LIK,Mes:UCN,MeRe:NST}.

\subsection{Standing assumptions} Throughout this text $A = A_0 \op A_1$ and $B = B_0 \op B_1$ will be $\zz/2\zz$-graded $C^*$-algebras with $A$ separable and $B$ $\si$-unital (meaning that $B$ has a countable approximate identity). We moreover fix a norm-dense $\zz/2\zz$-graded $*$-subalgebra $\s A \su A$, which we require to be generated as a~$*$-algebra by some countable subset $\{ x_j \,|\, j \in \nn \} \su \s A$. Remark that the grading on $\s A$ is compatible with the grading on $A$ meaning that the $\zz/2\zz$-grading operator $\ga_A \colon A \to A$ (being $1$ on~$A_0$ and~$-1$ on~$A_1$) induces the $\zz/2\zz$-grading operator $\ga_{\s A} \colon \s A \to \s A$.

\section[Kasparov modules and $KK$-theory]{Kasparov modules and $\boldsymbol{KK}$-theory}\label{s:boukas}
In this section we give a brief summary of the main definitions concerning Kasparov modules and $KK$-theory. For more details the reader can consult the following references: \cite{Bla:KOA,JeTh:EKT, Kas:OFE}. The commutators appearing in this section are all graded commutators. For a $\zz/2\zz$-graded $C^*$-correspondence $X$ from $A$ to $B$ we usually suppress the even $*$-homomorphism $\pi_X\colon A \to \B L(X)$, which determines the left action of $A$ on $X$ (where $\B L(X)$ denotes the $\zz/2\zz$-graded $C^*$-algebra of bounded adjointable operators on $X$).

\begin{dfn}
A \emph{Kasparov module} from $A$ to $B$ is a pair $(X,F)$ where $X$ is a countably generated $\zz/2\zz$-graded $C^*$-correspondence from $A$ to $B$ and $F\colon X \to X$ is an odd bounded adjointable operator such that
\[
a (F - F^*) , \ a \big(F^2 - 1\big) , \ [F,a] \colon \ X \to X
\]
are compact operators for all $a \in A$.

A Kasparov module $(X,F)$ from $A$ to $B$ is \emph{degenerate} when
\[
a (F - F^*) = a \big(F^2 - 1\big) = [F,a] = 0 \qquad \mbox{for all} \quad a \in A .
\]
\end{dfn}

\begin{dfn}Two Kasparov modules $(X_0,F_0)$ and $(X_1,F_1)$ from $A$ to $B$ are \emph{unitarily equivalent} when there exists an even unitary isomorphism of $\zz/2\zz$-graded $C^*$-correspondences $U\colon X_0 \to X_1$ such that $U F_0 U^* = F_1$. In this case we write $(X_0,F_0) \sim_u (X_1,F_1)$. Remark that~$U$ (by definition) has to intertwine the left actions as well so that $U \pi_{X_0}(a) U^* = \pi_{X_1}(a)$ for all $a \in A$.
\end{dfn}

When given a $\zz/2\zz$-graded $C^*$-algebra $C$ and an even $*$-homomorphism $\be\colon B \to C$ we may ``change the base'' of a $\zz/2\zz$-graded $C^*$-correspondence $X$ from $A$ to $B$. Indeed, we may consider~$C$ as a~$\zz/2\zz$-graded $C^*$-correspondence from $B$ to $C$ and form the interior tensor product $X \hot_{\be} C$ which is a $\zz/2\zz$-graded $C^*$-correspondence from~$A$ to~$C$. Any bounded adjointable operator $T\colon X \to X$ then induces a bounded adjointable operator $T \hot_{\be} 1\colon X \hot_{\be} C \to X \hot_{\be} C$ and this operation yields an even $*$-homomorphism $\B L(X) \to \B L\big(X \hot_{\be} C\big)$, see for example \cite[Chapter~4]{Lan:HCM}.

\begin{dfn}Two Kasparov modules $(X_0,F_0)$ and $(X_1,F_1)$ both from $A$ to $B$ are \emph{homotopic} when there exists a Kasparov module $(X,F)$ from $A$ to $C([0,1],B)$ such that
\[
\big(X \hot_{{\rm ev}_0} B,F \hot_{{\rm ev}_0} 1\big) \sim_u (X_0,F_0) \qquad \mbox{and} \qquad \big(X \hot_{{\rm ev}_1} B,F \hot_{{\rm ev}_1} 1\big) \sim_u (X_1,F_1) ,
\]
where ${\rm ev}_t\colon C( [0,1],B) \to B$ denotes the even $*$-homomorphism given by evaluation at $t \in [0,1]$. In this case we write $(X_0,F_0) \sim_h (X_1,F_1)$.
\end{dfn}

It is a non-trivial fact that the above homotopy relation is an equivalence relation and it can be difficult to find a record of this result in the standard literature on $KK$-theory. We state the result as a proposition here and notice that the proof is very similar to the proof given in the unbounded setting later on, see Proposition~\ref{p:unbhom} and in particular Lemma~\ref{l:coucom} which can be applied to prove the transitivity of the relation $\sim_h$.

\begin{prop}Homotopy of Kasparov modules is an equivalence relation.
\end{prop}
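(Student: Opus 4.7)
My plan is to verify that $\sim_h$ is reflexive, symmetric, and transitive. Reflexivity of $(X,F)$ is witnessed by the constant homotopy $\big(X \hot_\iota C([0,1],B),\, F \hot_\iota 1\big)$, where $\iota\colon B \to C([0,1],B)$ is the even $*$-homomorphism embedding $B$ as constant functions; evaluation at either endpoint recovers $(X,F)$ via the canonical isomorphism $X \hot_B B \cong X$. Symmetry follows by composing a given homotopy with the even $*$-automorphism $\tau\colon C([0,1],B) \to C([0,1],B)$, $\tau(f)(t) = f(1-t)$, which intertwines ${\rm ev}_0$ and ${\rm ev}_1$.

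For transitivity, suppose $(X,F)$ is a homotopy from $(X_0,F_0)$ to $(X_1,F_1)$ and $(Y,G)$ a homotopy from $(X_1,F_1)$ to $(X_2,F_2)$. The two unitary equivalences at $(X_1,F_1)$ compose to an even unitary $V\colon X \hot_{{\rm ev}_1} B \to Y \hot_{{\rm ev}_0} B$ intertwining the left $A$-actions and the induced operators $F \hot_{{\rm ev}_1} 1$ and $G \hot_{{\rm ev}_0} 1$. Using the pullback description
\[
C([0,1],B) \cong \big\{(f,g) \in C([0,1],B) \op C([0,1],B) : f(1) = g(0)\big\}
\]
obtained by reparametrising $[0,1/2]$ and $[1/2,1]$ each to $[0,1]$, I would form the pullback correspondence
\[
Z := \big\{(x,y) \in X \op Y : V\big(x \hot_{{\rm ev}_1} 1\big) = y \hot_{{\rm ev}_0} 1\big\}
\]
with operator $H := F \op G$ restricted to $Z$. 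Evaluation of $(Z,H)$ at the endpoints $0$ and $1$ recovers $(X_0,F_0)$ and $(X_2,F_2)$ up to unitary equivalence.

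The main obstacle is checking that $(Z,H)$ really is a Kasparov module. Countable generation of $Z$ follows from that of $X$ and $Y$, but the compactness of $a(H-H^*)$, $a\big(H^2-1\big)$ and $[H,a]$ on the pullback requires knowing that a pair of compact operators on $X$ and on $Y$ whose restrictions to the evaluation fibres at the gluing point coincide under $V$ assemble into a compact operator on $Z$. This gluing statement is precisely the bounded counterpart of Lemma~\ref{l:coucom} that is proved later in the unbounded setting, and the same argument applies here with $F$ and $G$ replaced by bounded operators; one then applies it to the compact operators $a(F-F^*)$, $a\big(F^2-1\big)$, $[F,a]$ on $X$ and their analogues on $Y$, whose $V$-compatibility at the gluing point is automatic from the unitary equivalences with $(X_1,F_1)$.
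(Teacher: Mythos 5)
Your proposal is correct and takes essentially the same route as the paper, which proves this proposition by pointing to the unbounded analogue (Proposition~\ref{p:unbhom}): a constant homotopy for reflexivity, the flip automorphism $f(t)\mapsto f(1-t)$ for symmetry, and the glued correspondence over $C([0,1],B)$ together with the gluing Lemma~\ref{l:coucom} for transitivity. Note only that Lemma~\ref{l:coucom} is stated for arbitrary compact operators on countably generated correspondences over $C([0,1],B)$, so it applies verbatim here --- covering both the countable generation of your pullback $Z$ (which is not automatic and is part of that lemma) and the compactness of the glued operators $a(H-H^*)$, $a\big(H^2-1\big)$, $[H,a]$ --- with no separate ``bounded counterpart'' needed.
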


\begin{dfn}\emph{$KK$-theory} from $A$ to $B$ consists of Kasparov modules from $A$ to $B$ modulo homotopies. $KK$-theory from $A$ to $B$ is denoted by $KK(A,B)$.
\end{dfn}

We may form the \emph{direct sum} of two Kasparov modules $(X,F)$ and $(X',F')$ from $A$ to $B$ and this is the Kasparov module from $A$ to $B$ given by
\[
(X,F) + (X',F') := (X \op X', F \op F') .
\]
The \emph{zero module} is the Kasparov module $(0,0)$ from $A$ to $B$.

We quote the following two results from \cite[Chapter~17]{Bla:KOA}:

\begin{prop}\label{p:degbou}Any degenerate Kasparov module from $A$ to $B$ is homotopic to the zero module.
\end{prop}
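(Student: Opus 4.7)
The plan is to construct an explicit homotopy between $(X, F)$ and $(0,0)$ via a cone-type construction over the unit interval. View $C_0((0,1], B)$ as a $\zz/2\zz$-graded $C^*$-correspondence from $B$ to $C([0,1], B)$ in the natural way (constant-function left action, pointwise right action), and form the interior tensor product $\wit X := X \hot_B C_0((0,1], B)$, which is a $\zz/2\zz$-graded $C^*$-correspondence from $A$ to $C([0,1], B)$ canonically isomorphic to $C_0((0,1], X)$ with pointwise left $A$- and right $C([0,1], B)$-actions. Countable generation of $\wit X$ over $C([0,1], B)$ is inherited from the countable generation of $X$ over $B$ together with that of $C_0((0,1], B)$ over $C([0,1], B)$ (the latter coming from separability of $C_0((0,1])$ combined with a countable approximate identity for $B$).

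Set $\wit F := F \hot 1 \colon \wit X \to \wit X$, an odd bounded adjointable operator acting pointwise as $(\wit F \xi)(t) = F\xi(t)$. This is the step where degeneracy of $(X, F)$ is essential: from $a(F - F^*) = a\big(F^2 - 1\big) = [F, a] = 0$ for all $a \in A$, one immediately obtains the pointwise vanishing of $a(\wit F - \wit F^*)$, $a\big(\wit F^2 - 1\big)$ and $[\wit F, a]$ on $C_0((0,1], X)$. Hence $(\wit X, \wit F)$ is itself a degenerate Kasparov module, and in particular a Kasparov module from $A$ to $C([0,1], B)$.

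It remains to identify the two endpoint fibers. On the one hand, ${\rm ev}_0$ annihilates every element of $C_0((0,1], B)$, so $\wit X \hot_{{\rm ev}_0} B = 0$ and the fiber at $t=0$ is unitarily equivalent to $(0,0)$. On the other hand, the map $f \hot b \mapsto f(1) b$ is a unitary isomorphism of $\zz/2\zz$-graded $C^*$-correspondences $C_0((0,1], B) \hot_{{\rm ev}_1} B \cong B$, yielding an identification $\wit X \hot_{{\rm ev}_1} B \cong X$ under which $\wit F \hot_{{\rm ev}_1} 1$ is carried to $F$. This exhibits $(X, F) \sim_h (0, 0)$ (using the symmetry of $\sim_h$ established in the preceding proposition).

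There is no genuine obstacle in this argument: the degeneracy hypothesis is strong enough that the Kasparov module conditions for $(\wit X, \wit F)$ reduce to pointwise vanishing, and the only real care needed is the standard bookkeeping for interior tensor products required to identify the two endpoint fibers.
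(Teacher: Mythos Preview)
Your proof is correct and is precisely the standard cone construction; the paper does not supply its own proof but simply quotes this result from \cite[Chapter~17]{Bla:KOA}, where the argument is essentially the one you give. In fact the same construction reappears later in the paper (proof of Proposition~\ref{p:lipinv}) in the unbounded setting, using $C_0\big((0,1],\wit X\big)$ with trivial left action to contract an auxiliary module to zero.
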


\begin{Theorem}The direct sum operation and the zero module provide $KK$-theory from~$A$ to~$B$ with the structure of an abelian group.
\end{Theorem}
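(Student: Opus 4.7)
The plan is to verify, in turn, that direct sum gives a well-defined associative and commutative binary operation on $KK(A,B)$ with identity $[(0,0)]$, and that every class admits an inverse. Well-definedness, associativity, commutativity, and the identity property are essentially formal: the direct sum of two homotopies over $C([0,1],B)$ is a homotopy between the direct sums, and associativity, commutativity and the identity law hold on the nose up to the obvious unitary isomorphisms $(X \op X') \op X'' \cong X \op (X' \op X'')$, $X \op X' \cong X' \op X$, and $X \op 0 \cong X$ of $\zz/2\zz$-graded $C^*$-correspondences, which produce constant homotopies of Kasparov modules.

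The crux of the theorem is the existence of inverses. Given a Kasparov module $(X,F)$ from $A$ to $B$, I would form the \emph{opposite} Kasparov module $(X^{{\rm op}}, -F)$, where $X^{{\rm op}}$ denotes $X$ with grading operator $-\ga_X$ and with left action of $A$ twisted by the grading automorphism $\ga_A$. With these conventions, the flip operator $J\colon X \op X^{{\rm op}} \to X \op X^{{\rm op}}$, $(x,y) \mapsto (y,x)$, is odd, self-adjoint, satisfies $J^2 = 1$, graded-commutes with the diagonal $A$-action, and anticommutes with $F_0 := F \op (-F)$. Consider then the rotation path
\[
F_t := \cos(\pi t/2) \, F_0 + \sin(\pi t/2) \, J , \qquad t \in [0,1] ,
\]
viewed as assembling into an odd bounded adjointable operator on the constant $\zz/2\zz$-graded $C^*$-correspondence $C\bigl([0,1], X \op X^{{\rm op}}\bigr)$ from $A$ to $C([0,1],B)$. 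Using the algebraic relations above one finds
\[
F_t^2 - 1 = \cos^2(\pi t/2) \bigl(F_0^2 - 1\bigr) \q \mbox{and} \q [F_t, a]_{{\rm gr}} = \cos(\pi t/2) [F_0, a]_{{\rm gr}},
\]
together with $F_t - F_t^* = \cos(\pi t/2) (F_0 - F_0^*)$, so all the required compactness conditions hold with norm-continuous dependence on $t$. Thus $t \mapsto F_t$ provides a homotopy from $(X,F) \op (X^{{\rm op}}, -F)$ at $t=0$ to the manifestly degenerate module $(X \op X^{{\rm op}}, J)$ at $t=1$. Applying Proposition~\ref{p:degbou} together with transitivity of the homotopy relation yields $[(X,F)] + [(X^{{\rm op}}, -F)] = [(0,0)]$.

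The main obstacle is choosing the conventions for the opposite module correctly so that the flip $J$ graded-commutes with the $A$-action: without the twist by $\ga_A$ on the left action of $X^{{\rm op}}$, odd elements of $A$ would spoil this relation and the rotation trick would not produce a Kasparov module at each intermediate time. Once the conventions are set, the verification of the Kasparov conditions for $F_t$ is an elementary algebraic computation, and the remaining group axioms follow from the formal observations in the first paragraph.
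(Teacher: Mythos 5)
Your argument is correct: the paper itself does not prove this statement but quotes it from Blackadar's book (Chapter~17), and your proof — forming the opposite module $\big(X^{{\rm op}},-F\big)$ with the grading-twisted left action and rotating $F \op (-F)$ into the degenerate flip operator, then invoking Proposition~\ref{p:degbou} — is precisely the standard argument given in that cited source. Your algebraic identities (the vanishing of the graded commutator of the flip with the twisted action for odd elements, the anticommutation with $F \op (-F)$, and the resulting formulas for $F_t^2-1$, $F_t-F_t^*$, $[F_t,a]$) all check out, so no gap remains.
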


\section{Unbounded Kasparov modules}
In this section we review the main results of the paper \cite{BaJu:TBK}, which can be regarded as the founding paper on unbounded $KK$-theory. We recall that a symmetric unbounded operator $D\colon \operatorname{Dom}(D) \to X$ acting on a Hilbert $C^*$-module $X$ over $B$ is selfadjoint and regular when the operators $D \pm i\colon \operatorname{Dom}(D) \to X$ are surjective, see \cite[Lemmas~9.7 and~9.8]{Lan:HCM}. Unbounded selfadjoint and regular operators admit a continuous functional calculus as developed in~\cite{Wor:UAQ,WoNa:OTC}, see also \cite[Theorem~10.9]{Lan:HCM}. Notice that in our convention all unbounded operators are densely defined.

\begin{dfn}\label{d:unbkas}
An \emph{unbounded Kasparov module} from $\s A$ to $B$ is a pair $(X,D)$, where $X$ is a~countably generated $\zz/2\zz$-graded $C^*$-correspondence from $A$ to $B$ and $D \colon \operatorname{Dom}(D) \to X$ is an odd unbounded selfadjoint and regular operator such that
\begin{enumerate}\itemsep=0pt
\item[1)] each $a \in \s A$ preserves the domain of $D$ and the graded commutator $[D,a]\colon \operatorname{Dom}(D) \to X$ extends to a bounded operator $d(a)\colon X \to X$;
\item[2)] the operator $a \cd (i + D)^{-1}\colon X \to X$ is compact for all $a \in \s A$.
\end{enumerate}

An unbounded Kasparov module $(X,D)$ from $\s A$ to $B$ is \emph{Lipschitz regular} when the graded commutator
\[
[|D|,a] \colon \ \operatorname{Dom}(D) \to X
\]
extends to a bounded operator on $X$ for all $a \in \s A$.
\end{dfn}

For an unbounded Kasparov module $(X,D)$ it follows automatically that $d(a)\colon X \to X$ is adjointable for all $a \in \s A$ and we have the formulae
\[
d(a)^* = \begin{cases} - d(a^*) & \text{for} \ a \text{ even}, \\ d(a^*) & \text{for} \ a \text{ odd}.\end{cases}
\]

\begin{dfn}\label{d:dirsum}
The \emph{direct sum} of two unbounded Kasparov modules $(X,D)$ and $(X', D')$ from $\s A$ to $B$ is the unbounded Kasparov module
\[
(X, D) + (X', D') := (X \op X', D \op D')
\]
from $\s A$ to $B$. The \emph{zero module} from $\s A$ to $B$ is the unbounded Kasparov module $(0,0)$.
\end{dfn}

It was proved in \cite{BaJu:TBK} that every unbounded Kasparov module represents a class in $KK$-theory:

\begin{Theorem}
Suppose that $(X,D)$ is an unbounded Kasparov module from $\s A$ to $B$. Then the pair $\big(X, D\big(1 + D^2\big)^{-1/2}\big)$ is a Kasparov module from $A$ to $B$. In particular, we have an associated class $\big[X, D\big(1 + D^2\big)^{-1/2}\big] \in KK(A,B)$ in $KK$-theory.
\end{Theorem}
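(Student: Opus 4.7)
The operator $F := D\bigl(1+D^2\bigr)^{-1/2}$ is produced by applying the continuous functional calculus of the self-adjoint and regular operator $D$ to the bounded continuous odd real function $f(t) = t\bigl(1+t^2\bigr)^{-1/2}$; hence $F \in \B L(X)$ is odd and self-adjoint. Self-adjointness immediately gives $a(F-F^*) = 0$. The functional calculus also yields $F^2 - 1 = -\bigl(1+D^2\bigr)^{-1} = -(i+D)^{-1}(-i+D)^{-1}$, so the hypothesis that $a(i+D)^{-1}$ is compact for $a \in \s A$ produces compactness of $a\bigl(F^2-1\bigr)$ for $a \in \s A$, and this extends to all $a \in A$ by norm-density since $\{a \in A \,|\, a(F^2-1) \in \B K(X)\}$ is norm-closed.

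The substantive point is to prove $[F,a] \in \B K(X)$ for every $a \in A$. Since $\{a \in A \,|\, [F,a] \in \B K(X)\}$ is a norm-closed subalgebra of $A$, it suffices to argue for $a \in \s A$. The plan is to use the Laplace-type integral representation
\[
F = \frac{1}{\pi}\int_0^\infty \lambda^{-1/2}\, D\bigl(1+\lambda+D^2\bigr)^{-1}\, d\lambda ,
\]
valid in operator norm by the continuous functional calculus. Applying the graded Leibniz rule to the integrand and invoking the identity $D^2\bigl(1+\lambda+D^2\bigr)^{-1} = 1 - (1+\lambda)\bigl(1+\lambda+D^2\bigr)^{-1}$ reduces it to
\[
\bigl[D\bigl(1+\lambda+D^2\bigr)^{-1},\, a\bigr] = (1+\lambda)\, k_\la(D)\, d(a)\, k_\la(D) - (-1)^{|a|}\, m_\la(D)\, d(a)\, m_\la(D),
\]
where $k_\la(t) := \bigl(1+\lambda+t^2\bigr)^{-1}$ and $m_\la(t) := t k_\la(t)$ both lie in $C_0(\rr)$.

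To establish compactness of each integrand, expand $d(a) = Da - (-1)^{|a|} aD$ and absorb each unbounded $D$-factor into the adjacent $k_\la(D)$ or $m_\la(D)$ via $D k_\la(D) = m_\la(D)$ and $D m_\la(D) = 1 - (1+\lambda) k_\la(D)$; this expresses the integrand as a linear combination of bounded operators of the form $\chi(D)\, a\, \chi'(D)$ in which at least one of $\chi, \chi'$ belongs to $C_0(\rr)$. The hypothesis $a(i+D)^{-1} \in \B K(X)$ upgrades, via a standard principal $C_0$-ideal argument together with taking adjoints (as $\s A$ is $*$-closed), to $ag(D),\, g(D)a \in \B K(X)$ for every $g \in C_0(\rr)$ and $a \in \s A$. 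Consequently each such $\chi(D)\, a\, \chi'(D)$ is compact, uniform norm bounds of order $\lambda^{-1/2}(1+\lambda)^{-1}$ render the integral absolutely norm-convergent, and $[F,a]$ emerges as a norm-limit of compact operators, hence is itself compact. The principal obstacle is that a naive commutator computation produces unbounded $D\, d(a)\, D$ terms that would require a bounded second-order commutator $[D,d(a)]$, which is not assumed; the identity $D^2\bigl(1+\lambda+D^2\bigr)^{-1} = 1 - (1+\lambda)\bigl(1+\lambda+D^2\bigr)^{-1}$ is the algebraic device that converts these problematic terms into the manageable $C_0$-sandwich form above.
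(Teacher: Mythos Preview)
The paper does not give its own proof of this theorem; it is stated and attributed to Baaj and Julg \cite{BaJu:TBK}. Your argument is essentially the classical Baaj--Julg proof, and the commutator identity you derive,
\[
\bigl[D(1+\lambda+D^2)^{-1},a\bigr]=(1+\lambda)\,k_\lambda(D)\,d(a)\,k_\lambda(D)-(-1)^{|a|}m_\lambda(D)\,d(a)\,m_\lambda(D),
\]
together with the $\lambda^{-1/2}(1+\lambda)^{-1}$ bound and the $C_0$-sandwich compactness argument, is correct and is exactly what one finds in the literature (compare the paper's own use of the same integral representation in equation~\eqref{eq:intpow} for related purposes).

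There is one genuine imprecision. You assert that the integral representation
\[
F=\frac{1}{\pi}\int_0^\infty \lambda^{-1/2}\,D(1+\lambda+D^2)^{-1}\,d\lambda
\]
is ``valid in operator norm''. It is not: $\bigl\|D(1+\lambda+D^2)^{-1}\bigr\|=\tfrac{1}{2}(1+\lambda)^{-1/2}$, so the integrand has norm of order $\lambda^{-1}$ at infinity and the integral diverges in operator norm. (Contrast the paper's formula~\eqref{eq:intpow} for $(1+D^2)^{-1/2}$, which \emph{does} converge in norm.) The representation holds only strongly, and you need this to justify passing the commutator inside the integral. The fix is routine: work with the truncated integrals $F_N:=\frac{1}{\pi}\int_{1/N}^{N}\lambda^{-1/2}D(1+\lambda+D^2)^{-1}\,d\lambda$, which converge strongly to $F$; your estimates show that $[F_N,a]$ is compact and converges \emph{in norm} (because the commutator integrand enjoys the improved $\lambda^{-1/2}(1+\lambda)^{-1}$ decay), so $[F,a]$ is a norm limit of compacts. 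With this correction the proof is complete.
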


We refer to the assignment $(X,D) \mapsto \big[ X, D\big(1 + D^2\big)^{-1/2}\big]$ which sends an unbounded Kasparov module from $\s A$ to $B$ to its associated class in $KK(A,B)$ as the \emph{Baaj--Julg bounded transform}.

It turns out that every class in $KK$-theory can be represented by an unbounded Kasparov module. This result is also due to Baaj and Julg~\cite{BaJu:TBK}. The standing hypothesis that $\s A \su A$ is countably generated as a~$*$-algebra plays a crucial role in the proof.

Notice that a bounded positive operator $\De\colon X \to X$ is strictly positive precisely when the image of $\De\colon X \to X$ is dense in $X$ and in this case $\De^{-1}\colon \operatorname{Im}(\De) \to X$ is an unbounded positive and regular operator, see \cite[Lemma~10.1]{Lan:HCM}. In particular, $\operatorname{Dom}\big(\De^{-1}\big) := \operatorname{Im}(\De)$.

\begin{Theorem}\label{t:bousur}
Suppose that $\s A \su A$ is a norm-dense and countably generated $\zz/2\zz$-graded $*$-subalgebra. Suppose moreover that $(X,F)$ is a Kasparov module from $A$ to $B$ with $F = F^*$ and $F^2 = 1$. Then there exists an even strictly positive compact operator $\De\colon X \to X$ such that
\begin{enumerate}\itemsep=0pt
\item[$1)$] the operator $F$ preserves the domain of $\De^{-1}$ and $\big[ F, \De^{-1} \big] = 0$ on $\operatorname{Dom}\big(\De^{-1}\big)$;
\item[$2)$] each $a \in \s A$ preserves the domain of $\De^{-1}$ and $\big[a,\De^{-1}\big] \colon \operatorname{Dom}\big(\De^{-1}\big) \to X$ extends to a~compact operator on~$X$;
\item[$3)$] for each $a \in \s A$, the image of the graded commutator $[F,a]$ is contained in $\operatorname{Dom}\big(\De^{-1}\big)$ and $\De^{-1}[F,a]\colon X \to X$ is a compact operator.
\end{enumerate}
Moreover, with $D := \De^{-1} F\colon \operatorname{Dom}\big(\De^{-1}\big) \to X$ we have that $(X, D)$ is an unbounded Kasparov module from $\s A$ to~$B$ satisfying that
\[
\big[ X, D\big(1 + D^2\big)^{-1/2}\big] = [X,F]
\]
in $KK(A,B)$. In particular, it holds that the Baaj--Julg bounded transform is surjective.
\end{Theorem}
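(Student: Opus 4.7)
The plan is to follow the classical Baaj--Julg strategy: build $\De$ from a carefully chosen quasi-central approximate unit for the compact operators $\B K(X)$, and then identify the bounded transform by a direct functional calculus computation together with an operator homotopy.

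Using $\si$-unitality of $B$ (hence the existence of a countable approximate unit for $\B K(X)$) together with the quasi-central approximate unit theorem, I~would first produce an increasing sequence $(u_n)_{n \in \nn}$ of even positive contractions in $\B K(X)$ with $u_{n+1} u_n = u_n$ and with each $u_n$ commuting with $F$ (which can be arranged by replacing $u_n$ with $\tfrac{1}{2}(u_n + F u_n F)$, exploiting $F = F^*$ and $F^2 = 1$), subject additionally to the estimates
\[
\| [u_n, x_j] \| \leq 2^{-n} \q \mbox{and} \q \| (1 - u_n) [F, x_j] \| \leq 2^{-n}
\]
for all $j \leq n$; the second estimate uses compactness of $[F, x_j]$. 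Setting $p_n := u_{n+1} - u_n$ with $u_{-1} := 0$, one has $p_n \geq 0$, $\sum_{n} p_n = 1$ strictly, and $[F, p_n] = 0$. I~then define
\[
\De := \sum_{n=0}^{\infty} 2^{-n} p_n ,
\]
which converges in norm to an even strictly positive compact operator commuting with $F$, so that $\De^{-1}$ makes sense as a positive selfadjoint regular unbounded operator.

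Condition~(1) of the theorem is then immediate from $[F, p_n] = 0$. For condition~(2), one writes the formal identity
\[
\big[ x_j, \De^{-1} \big] = - \De^{-1} [x_j, \De] \De^{-1} = -\sum_n 2^{-n} \De^{-1} [x_j, p_n] \De^{-1}
\]
and uses the first quasi-central estimate (which forces $\|[x_j, p_n]\| = O(2^{-n})$ for $j \leq n$) together with compactness of the $p_n$ to obtain a norm-convergent series of compact operators. For condition~(3), the analogous expansion
\[
\De^{-1} [F, x_j] = \sum_n 2^n p_n [F, x_j]
\]
converges in the compact operators thanks to the second quasi-central estimate, which controls $p_n [F, x_j] = (u_{n+1} - u_n)[F, x_j]$ at rate better than $2^{-n}$. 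The main obstacle here is the delicate balancing act: the decay rates of the quasi-central estimates must outrun the growth rate $2^n$ of~$\De^{-1}$ on the range of~$p_n$, while $\De$ itself must remain compact. Once these conditions are established, putting $D := \De^{-1} F$ gives an odd selfadjoint regular operator (since $D^* = F \De^{-1} = \De^{-1} F = D$ by~(1), and $D^2 = \De^{-2}$), satisfying the commutator bound $[D, a] = \big[\De^{-1}, a\big] F + \De^{-1} [F, a] \in \B L(X)$ for $a \in \sA$ by~(2) and~(3), and with $a(i + D)^{-1}$ compact since $(i + D)^{-1}$ is a bounded continuous function of the compact operator $\De$. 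Hence $(X, D)$ is an unbounded Kasparov module from $\sA$ to $B$.

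For the bounded transform, using $F^2 = 1$ and $[F, \De] = 0$ one calculates
\[
D \big(1 + D^2\big)^{-1/2} = \De^{-1} F \big(1 + \De^{-2}\big)^{-1/2} = \De^{-1} F \cd \De \big(1 + \De^2\big)^{-1/2} = F \big(1 + \De^2\big)^{-1/2} .
\]
To identify the class of $\big(X, F(1 + \De^2)^{-1/2}\big)$ with $[X, F]$ in $KK(A,B)$, I~would construct the operator homotopy
\[
F_t := F \big(1 + t \De^2\big)^{-1/2}, \q t \in [0,1] ,
\]
which assembles into a Kasparov module from $A$ to $C([0,1], B)$: selfadjointness is preserved since $\De$ commutes with $F$, while $a\big(F_t^2 - 1\big) = -a \cd t \De^2 \big(1 + t\De^2\big)^{-1}$ and $[F_t, a] = [F, a]\big(1 + t\De^2\big)^{-1/2} + F \big[\big(1 + t\De^2\big)^{-1/2} - 1, a\big]$ are norm-continuous families of compact operators, using compactness of $\De$, of $[F,a]$ for $a \in A$, and of $\big(1 + t\De^2\big)^{-1/2} - 1$. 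Evaluation at $t = 0$ and $t = 1$ then yields the desired identification, completing the proof.
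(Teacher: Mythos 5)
The paper does not actually prove Theorem~\ref{t:bousur}; it quotes it from Baaj--Julg \cite{BaJu:TBK}, so your proposal has to be measured against the classical lifting argument, which you do follow in outline: a quasi-central approximate unit $(u_n)$ with $u_{n+1}u_n=u_n$ commuting with $F$, the compact operator $\De=\sum_n\la_n(u_{n+1}-u_n)$, $D=\De^{-1}F$, and the endgame $D\big(1+D^2\big)^{-1/2}=F\big(1+\De^2\big)^{-1/2}$ together with the operator homotopy $F_t=F\big(1+t\De^2\big)^{-1/2}$ (or simply the observation that $F\big(1+\De^2\big)^{-1/2}-F$ is compact). That last part is correct. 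The gap lies in the heart of the matter, namely the verification of conditions~$(2)$ and~$(3)$.

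Concretely: the operators $p_n:=u_{n+1}-u_n$ are positive contractions, not mutually orthogonal projections (adjacent ones overlap, $p_np_{n+1}\neq 0$ and $p_n^2\neq p_n$), so $\De^{-1}\neq\sum_n 2^np_n$ and the ``expansion'' $\De^{-1}[F,x_j]=\sum_n 2^np_n[F,x_j]$ is not a valid operator identity. What functional calculus in the commutative $C^*$-algebra generated by $\{u_n\}$ does give is the one-sided bound $\big\|\De^{-1}p_n\big\|\leq 2^{n+1}$, since $\De\geq 2^{-n-1}$ on the support of $p_n$. With your rates this is not enough: using $p_nu_{n-2}=0$ one gets $\big\|\De^{-1}p_n[F,x_j]\big\|\leq\big\|\De^{-1}p_n\big\|\cdot\big\|(1-u_{n-2})[F,x_j]\big\|=O(1)$, so the series for condition~$(3)$ is not (visibly) norm-convergent; and for condition~$(2)$ the identity $\big[x_j,\De^{-1}\big]=-\De^{-1}[x_j,\De]\De^{-1}$ both presupposes that $x_j$ preserves $\operatorname{Im}(\De)$ (part of what is to be proved, so the argument is circular as written and must be run through bounded approximants of $\De^{-1}$) and carries \emph{two} factors of $\De^{-1}$, so the terms $2^{-n}\De^{-1}[x_j,p_n]\De^{-1}$ are again only $O(1)$ at best --- and even that requires first localizing $[x_j,p_n]$ near the support of $p_n$ (via $p_nu_{n-2}=0$, $(1-u_{n+2})p_n=0$, at the cost of further commutator errors), since a priori $\De^{-1}$ sitting on the far side of $x_j$ is not controlled by $2^{n}$. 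The delicate balancing act you mention is thus not achieved: one must fix the weights $\la_n$ first and extract the approximate unit recursively so that the quasi-centrality and compactness errors $\varepsilon_n$ satisfy $\sum_n\la_n^{-2}\varepsilon_n<\infty$ (for example $\varepsilon_n=8^{-n}$ against $\la_n=2^{-n}$), rather than taking $\varepsilon_n=\la_n=2^{-n}$. A smaller repairable point: replacing $u_n$ by $\tfrac12(u_n+Fu_nF)$ can destroy both the relation $u_{n+1}u_n=u_n$ and the quasi-central estimates (and evenness requires a further average over the grading); the cleaner route is to average a strictly positive element of $\B K(X)$ over $F$ and $\ga$ first and then build the whole quasi-central approximate unit out of continuous functions of that element, so that exact commutation with $F$, evenness and $u_{n+1}u_n=u_n$ come for free.
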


In the context of the above theorem, it is worthwhile to notice that the graded commutator $\big[\De^{-1} F,a\big]\colon \operatorname{Dom}\big(\De^{-1}\big) \to X$ does in fact extend to a \emph{compact} operator for all $a \in \s A$ and that $\big(i + \De^{-1} F\big)^{-1}\colon X \to X$ is compact even though the separable $C^*$-algebra $A$ need not be unital.

\section{Equivalence relations on unbounded Kasparov modules}
In this section we introduce an equivalence relation on unbounded Kasparov modules and use this equivalence relation to construct the topological unbounded $KK$-theory. A key ingredient in our approach is the following notion of a degenerate cycle:

\begin{dfn}\label{d:spedec}
An unbounded Kasparov module $(X,D)$ from $\s A$ to $B$ is \emph{spectrally decomposable} when there exists an orthogonal projection $P \colon X \to X$ such that
\begin{enumerate}\itemsep=0pt
\item[1)] $P$ preserves the domain of $D$ and $D P - P D = 0$ on $\operatorname{Dom}(D)$;
\item[2)] $D P$ and $D (P - 1)$ are unbounded positive operators;
\item[3)] $a P = P a$ for all even elements $a \in A$ and $a P = (1 - P) a$ for all odd elements $a \in A$;
\item[4)] $\ga P = (1 - P) \ga$, where $\ga\colon X \to X$ is the $\zz/2\zz$-grading operator on~$X$.
\end{enumerate}
\end{dfn}

Notice that it follows from Definition~\ref{d:spedec}$(1)$ and the regularity and selfadjointness of~$D$ that $DP$ and $D(P-1)$ are selfadjoint and regular as well.

For a spectrally decomposable unbounded Kasparov module $(X,D)$ with spectral decomposition given by an orthogonal projection $P\colon X \to X$, we apply the notation
\[
D_+ := DP \colon \ \operatorname{Dom}(DP) \to X \qquad \text{and} \qquad D_- := D(P - 1)\colon \ \operatorname{Dom}(D(P-1)) \to X
\]
for the associated unbounded positive and regular operators. 

We remark that $\operatorname{Dom}(D_+) = \{ \xi \in X \,|\, P \xi \in \operatorname{Dom}(D) \}$ and $\operatorname{Dom}(D_-) = \{ \xi \in X \,|\, (P - 1) \xi \in \operatorname{Dom}(D) \}$ so that we obtain the decomposition:
\[
\operatorname{Dom}(D) = \operatorname{Dom}(D_+) \cap \operatorname{Dom}(D_-), \qquad D = D_+ - D_- .
\]
We refer to \cite[Section~6]{Kaa:DAH} for more information on products of unbounded selfadjoint and regular operators with bounded adjointable operators.

\begin{lem}\label{l:spezer}
Suppose that an unbounded Kasparov module $(X,D)$ from $\s A$ to $B$ is spectrally decomposable. Then the class $\big[X,D\big(1 + D^2\big)^{-1/2}\big] \in KK(A,B)$ is equal to zero.
\end{lem}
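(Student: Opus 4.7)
The plan is to work entirely at the bounded level: I will exhibit a straight-line operator homotopy of Kasparov modules from the bounded transform $F := D\bigl(1 + D^2\bigr)^{-1/2}$ to the odd self-adjoint symmetry $2P - 1$, after which degeneracy of $(X, 2P-1)$ combined with Proposition~\ref{p:degbou} and transitivity of $\sim_h$ finishes the argument.

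First I would check that $(X, 2P - 1)$ is a degenerate Kasparov module from~$A$ to~$B$. Self-adjointness and $(2P-1)^2 = 1$ are immediate, and condition~(4) of Definition~\ref{d:spedec} gives $\ga(2P-1) = -(2P-1)\ga$, so that $2P-1$ is odd. For even $a \in A$, condition~(3) yields $[2P-1,a] = 2[P,a] = 0$. For odd $a \in A$ the graded commutator is the anticommutator $(2P-1)a + a(2P-1) = 2Pa + 2aP - 2a$, which vanishes upon substituting $aP = (1-P)a$.

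Next I would set $F_t := (1-t)F + t(2P-1)$ for $t \in [0,1]$ and verify that $(X, F_t)$ is a Kasparov module from $A$ to $B$ depending norm-continuously on~$t$, so that the family assembles into a Kasparov module over $C([0,1],B)$ connecting $(X, F)$ to $(X, 2P-1)$. Oddness and self-adjointness are inherited from the endpoints, while linearity and the previous paragraph reduce the graded commutator to $[F_t, a] = (1-t)[F, a]$, which is compact. For the squaring identity, the hypothesis that $P$ preserves $\operatorname{Dom}(D)$ and commutes with $D$ there implies, via functional calculus, that $F$ commutes with $2P-1$, whence
\[
F_t^2 - 1 = (1-t)^2\bigl(F^2 - 1\bigr) + 2t(1-t)\bigl(F(2P-1) - 1\bigr).
\]
The spectral decomposition $D = D_+ - D_-$ together with oddness of $f(s) := s(1+s^2)^{-1/2}$ identifies $F(2P-1)$ with $f(|D|)$, and the factorization
\[
a\bigl(f(|D|) - 1\bigr) = a\bigl(f(|D|)^2 - 1\bigr)\bigl(f(|D|)+1\bigr)^{-1} = a\bigl(F^2 - 1\bigr)\bigl(f(|D|)+1\bigr)^{-1}
\]
exhibits $a\bigl(F_t^2 - 1\bigr)$ as a compact operator for every $a \in A$, using only the standing hypothesis that $a(F^2 - 1)$ is compact and boundedness of $\bigl(f(|D|)+1\bigr)^{-1}$.

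The main technical obstacle I expect is precisely this squaring computation together with confirming that the resulting family really determines a Kasparov module over $C([0,1],B)$; the latter uses that $t \mapsto F_t$ is polynomial in $t$, so that $t \mapsto [F_t, a]$ and $t \mapsto a\bigl(F_t^2 - 1\bigr)$ are norm-continuous $\C K(X)$-valued functions, and that compactness statements obtained for $a \in \sA$ extend to all $a \in A$ by density since $\C K(X)$ is a norm-closed ideal. Once the homotopy $(X, F) \sim_h (X, 2P-1)$ is in place, Proposition~\ref{p:degbou} gives $(X, 2P-1) \sim_h 0$, and transitivity yields $\bigl[X, D\bigl(1 + D^2\bigr)^{-1/2}\bigr] = 0$ in $KK(A, B)$.
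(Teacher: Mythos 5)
Your proposal is correct and takes essentially the same route as the paper: both arguments show that $(X,2P-1)$ is a degenerate Kasparov module and then connect the bounded transform $F = D\bigl(1+D^2\bigr)^{-1/2}$ to $2P-1$ by a straight-line operator homotopy, the key point being that $a\bigl(F(2P-1)-1\bigr)$ (equivalently $a\bigl(F-(2P-1)\bigr)$) is compact for all $a \in A$. The only difference is cosmetic and lies in how this compactness is checked: the paper uses that $D_\pm - \bigl(1+D_\pm^2\bigr)^{1/2}$ extends to a bounded operator, whereas you identify $F(2P-1)$ with $|D|\bigl(1+D^2\bigr)^{-1/2}$ (this is exactly where conditions (1) and (2) of Definition~\ref{d:spedec} enter) and factor through $a\bigl(F^2-1\bigr)$; both computations exploit spectral decomposability in the same way.
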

\begin{proof}By $(3)$ and $(4)$ in Definition~\ref{d:spedec}, the pair $(X, 2 P - 1)$ is a degenerate Kasparov module from $A$ to $B$ and by Proposition~\ref{p:degbou} it therefore suffices to show that $\big(X,D\big(1 + D^2\big)^{-1/2}\big)$ is homotopic to $(X, 2P - 1)$. We let $a \in A$ and show that $D\big(1 + D^2\big)^{-1/2} P a - P a$ and $D\big(1 + D^2\big)^{-1/2}(1 - P) a - (P - 1) a$ are compact operators on $X$. By~$(1)$ in Definition~\ref{d:spedec}, it holds that
\[
1 + D_+^2 = \big(1 + D^2\big) P + 1 - P , \qquad 1 + D_-^2 = \big(1 + D^2\big)(1 - P) + P ,
\]
implying the identities
\[
\big(1 + D_+^2 \big)^{-1/2}P = \big(1 + D^2\big)^{-1/2} P , \qquad \big(1 + D_-^2\big)^{-1/2}(1-P) = \big(1 + D^2\big)^{-1/2} (1 - P) .
\]
We thus conclude that
\begin{gather*}
D\big(1 + D^2\big)^{-1/2} P = D_+ \big(1 + D_+^2\big)^{-1/2} , \\ D \big(1 + D^2\big)^{-1/2} (1 - P) = - D_- \big(1 + D_-^2 \big)^{-1/2} .
\end{gather*}
Using $(2)$ in Definition~\ref{d:spedec}, we see that $D_+ - \big(1 + D_+^2\big)^{1/2}\colon \operatorname{Dom}(D_+) \to X$ and $D_- - \big(1 + D_-^2 \big)^{1/2}\colon \operatorname{Dom}( D_-) \to X$ extend to bounded adjointable operators on~$X$. But this implies that
\begin{gather*}
 D\big(1 + D^2\big)^{-1/2} P a - P a = \big(D_+ - \big(1 + D_+^2\big)^{1/2}\big)\big(1 + D_+^2\big)^{-1/2} P a \\
\hphantom{D\big(1 + D^2\big)^{-1/2} P a - P a}{} = \big(D_+ - \big(1 + D_+^2\big)^{1/2}\big) \big(1 + D^2\big)^{-1/2} P a
\end{gather*}
and
\begin{gather*}
D\big(1 + D^2\big)^{-1/2} (1 - P) a - (P-1) a = - \big(D_- - \big(1 + D_-^2)^{1/2}\big)\big(1 + D_-^2\big)^{-1/2} (1 - P) a \\
\hphantom{D\big(1 + D^2\big)^{-1/2} (1 - P) a - (P-1) a}{} = -\big(D_- - \big(1 + D_-^2\big)^{1/2}\big) \big(1 + D^2\big)^{-1/2} (1 - P) a
\end{gather*}
are compact operators on $X$.
\end{proof}

\begin{rem}As pointed out to us by the referee, an alternative proof of Lemma \ref{l:spezer} can be given using a result of Skandalis~\cite[Lemma~11]{Ska:SRK}. Indeed, with notation as in Lemma~\ref{l:spezer} we obtain that
\[
(2P-1) D\big(1 + D^2\big)^{-1/2} + D\big(1 + D^2\big)^{-1/2} (2P-1) = 2(D_+ + D_-)\big(1 + D^2\big)^{-1/2} \geq 0 .
\]
It then follows from \cite[Lemma~11]{Ska:SRK} that the Kasparov modules $\big(X,D\big(1 + D^2\big)^{-1/2}\big)$ and \mbox{$(X,2P-1)$} are operator homotopic and hence also homotopic.

In fact, as was also remarked by the referee, even more is true: an unbounded Kasparov module $(X,D)$ from $\s A$ to $B$ is spectrally decomposable if and only if there exists a selfadjoint unitary operator $F\colon X \to X$ satisfying that $(X,F)$ is a degenerate Kasparov module together with the conditions $F D\big(1+D^2\big)^{-1/2} - D\big(1+D^2\big)^{-1/2} F = 0$ and $F D\big(1+D^2\big)^{-1/2} + D\big(1+D^2\big)^{-1/2} F \geq 0$. As such, spectral decomposability can be viewed as a condition on the bounded transform of $(X,D)$.
\end{rem}

\begin{dfn}Two unbounded Kasparov modules $(X,D)$ and $(X',D')$ are \emph{unitarily equivalent} when there exists an even unitary isomorphism of $C^*$-correspondences $U\colon X \to X'$ such that $U D U^* = D'$.
\end{dfn}

Unitary equivalence of unbounded Kasparov modules is indeed an equivalence relation and we denote it by $\sim_u$.

Suppose now that $C$ is an extra $\zz/2\zz$-graded $\si$-unital $C^*$-algebra and that $\be\colon B \to C$ is an even $*$-homomorphism. As in Section~\ref{s:boukas} we have the change of base operation given by the interior tensor product of $\zz/2\zz$-graded $C^*$-correspondences: $X \hot_\be C$. Moreover, any unbounded selfadjoint and regular operator $D\colon \operatorname{Dom}(D) \to X$ induces an unbounded selfadjoint and regular operator $D \hot_\be 1\colon \operatorname{Dom}\big(D \hot_\be 1\big) \to X \hot_\be C$, which has resolvents given by
\[
\big( D \hot_\be 1 + i \la \big)^{-1} = (D + i \la)^{-1} \hot_\be 1 \qquad \text{for all} \quad \la \in \rr \sem \{0\} .
\]
In particular, if $a (i + D)^{-1}\colon X \to X$ is a compact operator for some $a \in A$, then
\[
a \big(D \hot_\be 1 + i\big)^{-1} = a (D + i)^{-1} \hot_\be 1\colon \ X \hot_\be C \to X \hot_\be C
\]
is a compact operator as well, see \cite[Proposition~4.7]{Lan:HCM}. These observations allow us to formulate our notion of homotopy at the level of unbounded Kasparov modules:

\begin{dfn}
Two unbounded Kasparov modules $(X_0,D_0)$ and $(X_1,D_1)$ both from $\s A$ to $B$ are \emph{homotopic} when there exists an unbounded Kasparov module $(X,D)$ from $\s A$ to $C([0,1],B)$ such that
\[
\big(X \hot_{{\rm ev}_0} B,D \hot_{{\rm ev}_0} 1\big) \sim_u (X_0,D_0) \qquad \mbox{and} \qquad \big(X \hot_{{\rm ev}_1} B,D \hot_{{\rm ev}_1} 1\big) \sim_u (X_1,D_1) ,
\]
where ${\rm ev}_t\colon C( [0,1],B) \to B$ denotes the even $*$-homomorphism given by evaluation at $t \in [0,1]$. In this case we write $(X_0,D_0) \sim_h (X_1,D_1)$.
\end{dfn}

Before proving that homotopies of unbounded Kasparov modules yields an equivalence relation it is worthwhile to spend a little time on a glueing construction for $\zz/2\zz$-graded $C^*$-correspondences. Consider two countably generated $\zz/2\zz$-graded $C^*$-correspondences~$X$ and~$X'$ both from $A$ to $C([0,1],B)$ and suppose that
\[
U\colon \ X \hot_{{\rm ev}_1} B \to X' \hot_{{\rm ev}_0} B
\]
is an even unitary isomorphism of $C^*$-correspondences. This data gives rise to a $\zz/2\zz$-graded $C^*$-correspondence $X \ti_U X'$ from $A$ to $C([0,1],B)$ obtained by glueing $X$ and $X'$ using the unitary $U$ to identify the fibres sitting at $1$ and $0$, respectively. Indeed, we may define
\[
X \ti_U X' := \big\{ (\xi,\xi') \in X \op X' \,|\, U\big( \xi \hot_{{\rm ev}_1} 1 \big) = \xi' \hot_{{\rm ev}_0} 1 \big\}
\]
and endow this set with the vector space structure, left action of $A$ and $\zz/2\zz$-grading inherited from the direct sum $X \op X'$. To construct the right action of $C([0,1],B)$ and the inner product on $X \ti_U X'$, we introduce the even $*$-endomorphisms $\be_0 , \be_1\colon C( [0,1],B) \to C([0,1],B)$ by $\be_0(f)(t) = f(t/2)$ and $\be_1(f)(t) = f( (t + 1)/2)$ for all $f \in C([0,1],B)$ and $t \in [0,1]$. We put
\begin{gather*}
\inn{ (\xi,\xi'), (\eta,\eta')}(t) :=
\begin{cases} \inn{\xi,\eta}(2t) & \text{for} \ t \in [0,1/2], \\
\inn{\xi',\eta'}(2t - 1) & \text{for} \ t \in [1/2,1],
\end{cases} \qquad \text{and} \\
(\xi,\xi') \cd f := (\xi \cd \be_0(f),\xi' \cd \be_1(f)) ,
\end{gather*}
for all $(\xi,\xi'), (\eta,\eta') \in X \ti_U X'$, $f \in C([0,1],B)$ and $t \in [0,1]$.

\begin{lem}\label{l:coucom}
The $\zz/2\zz$-graded $C^*$-correspondence $X \ti_U X'$ from $A$ to $C([0,1],B)$ is countably generated. Moreover, if $K\colon X \to X$ and $K'\colon X' \to X'$ are compact operators such that $U ( K \hot_{{\rm ev}_1} 1) U^* = K' \hot_{{\rm ev}_0} 1$, then the direct sum $K \op K'\colon X \op X' \to X \op X'$ restricts to a~compact operator $K \op K'\colon X \ti_U X' \to X \ti_U X'$.
\end{lem}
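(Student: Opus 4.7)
The statement has two independent claims --- countable generation of $X \ti_U X'$ and compactness of the direct sum $K \op K'$ restricted to it --- which I would treat in that order.

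For countable generation, the plan is to combine countable generating sets $\{\xi_n\} \subset X$ and $\{\xi'_m\} \subset X'$ into three families of elements of $X \ti_U X'$: (a) ``left'' elements $(\xi_n \cdot f_k, 0)$, with $\{f_k\}$ a countable generating set of the separable ideal $\{g \in C([0,1], B) : g(1) = 0\}$ (these lie in $X \ti_U X'$ since $f_k(1) = 0$); (b) symmetric ``right'' elements $(0, \xi'_m \cdot g_l)$; and (c) ``bridge'' elements spanning the interface. For the bridges, I would use density of the natural image of $X'$ in $X' \hot_{{\rm ev}_0} B$ to choose, for each $n$, a sequence $\eta_{n,k} \in X'$ with $\eta_{n,k} \hot_{{\rm ev}_0} 1 \to U(\xi_n \hot_{{\rm ev}_1} 1)$; the near-pairs $(\xi_n, \eta_{n,k})$ are promoted to honest elements of $X \ti_U X'$ by correction using type (a) and (b) terms. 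Any $(\xi, \xi') \in X \ti_U X'$ is then approximated by right $C([0,1], B)$-combinations of these generators using a partition of unity $\phi_L + \phi_R = 1$ with $\phi_L(1) = 0$, $\phi_R(0) = 0$.

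For the compactness claim, invariance of $X \ti_U X'$ under $K \op K'$ is immediate from the intertwining hypothesis:
\[
U(K\xi \hot_{{\rm ev}_1} 1) = \bigl(U(K \hot_{{\rm ev}_1} 1)U^*\bigr)\,U(\xi \hot_{{\rm ev}_1} 1) = (K' \hot_{{\rm ev}_0} 1)(\xi' \hot_{{\rm ev}_0} 1) = K'\xi' \hot_{{\rm ev}_0} 1.
\]
To verify compactness, I would choose a partition of unity $\phi_L + \phi_M + \phi_R = 1$ on $[0,1]$ with $\phi_L$ supported in $[0, 1/3]$, $\phi_R$ in $[2/3, 1]$ and $\phi_M$ in $[1/3, 2/3]$, and decompose $K \op K' = (K \op K')M_{\phi_L} + (K \op K')M_{\phi_M} + (K \op K')M_{\phi_R}$. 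The outer pieces are the easy ones: for the left piece, $\be_1(\phi_L) \equiv 0$ and $\be_0(\phi_L)(1) = \phi_L(1/2) = 0$, so $(K \op K')M_{\phi_L}$ acts as $(\xi, \xi') \mapsto (K\xi \cdot \be_0(\phi_L), 0)$. Approximating $K$ in $\B L(X)$ by finite-rank operators $K_n = \sum_i |\alpha_i^n\rangle \langle \beta_i^n|$, a direct computation shows that the finite-rank operator
\[
\sum_i \big| \bigl(\alpha_i^n \cdot \sqrt{\be_0(\phi_L)},\, 0\bigr) \big\rangle \big\langle \bigl(\beta_i^n \cdot \sqrt{\be_0(\phi_L)},\, 0\bigr) \big|
\]
on $X \ti_U X'$ (whose kets and bras really do lie in $X \ti_U X'$ since $\sqrt{\be_0(\phi_L)}$ vanishes at $s = 1$) converges in norm to $(K \op K')M_{\phi_L}$; the right piece is handled symmetrically.

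The main obstacle is the middle piece $(K \op K')M_{\phi_M}$, where both image components are nonzero and finite-rank approximations of $K$ and $K'$ must be chosen compatibly under $U$. A direct computation gives
\[
\bigl|(\tilde\alpha, \tilde\alpha')\bigr\rangle \bigl\langle (\tilde\beta, \tilde\beta')\bigr|_{X \ti_U X'}(\xi, \xi') = \bigl( |\tilde\alpha\rangle\langle \tilde\beta|_X (\xi),\; |\tilde\alpha'\rangle\langle \tilde\beta'|_{X'} (\xi') \bigr)
\]
for $(\tilde\alpha, \tilde\alpha'), (\tilde\beta, \tilde\beta') \in X \ti_U X'$, so approximating $(K \op K')M_{\phi_M}$ on $X \ti_U X'$ reduces to simultaneously approximating $K \cdot M_{\be_0(\phi_M)}$ in $\B L(X)$ and $K' \cdot M_{\be_1(\phi_M)}$ in $\B L(X')$ by finite-rank sums whose individual kets and bras come from elements of $X \ti_U X'$. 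The intertwining $U(K \hot_{{\rm ev}_1} 1)U^* = K' \hot_{{\rm ev}_0} 1$ is essential here: starting from a finite-rank approximation of $K \hot_{{\rm ev}_1} 1$ in $\B K(X \hot_{{\rm ev}_1} B)$, one transports it to $\B K(X' \hot_{{\rm ev}_0} B)$ via $U$ and then uses density to lift the kets and bras back to $X$ and $X'$ compatibly, absorbing the resulting errors into correction terms built from the left/right generators. This is the delicate step where I expect most of the work to reside.
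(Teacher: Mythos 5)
You have identified the right difficulty (matching data across the gluing point), but the two places where that difficulty actually has to be resolved are exactly the places where your argument breaks down or is left open. First, the ``bridge'' construction: elements of type (a) and (b) have vanishing fibre at the interface (first component vanishes at $1$, second at $0$), so adding any combination of them to a near-pair $(\xi_n,\eta_{n,k})$ does not change the fibres $\xi_n \hot_{{\rm ev}_1} 1$ and $\eta_{n,k}\hot_{{\rm ev}_0} 1$ at all. Hence if $U(\xi_n\hot_{{\rm ev}_1}1)\neq \eta_{n,k}\hot_{{\rm ev}_0}1$, no such ``correction'' can promote the pair to an element of $X\ti_U X'$: density of the image of $X'$ in $X'\hot_{{\rm ev}_0}B$ is not enough. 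What you actually need is surjectivity of $\xi'\mapsto \xi'\hot_{{\rm ev}_0}1$ (true, since $X'\hot_{{\rm ev}_0}B\cong X'/X'\cdot\ker({\rm ev}_0)$ and $X'\cdot\ker({\rm ev}_0)$ is closed by a Cohen factorization argument), which would give exact bridges; but as written your generating set only spans pairs whose interface fibres vanish, and countable generation is not established.

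Second, for compactness the outer pieces and your formula for rank-one operators on $X\ti_U X'$ are fine, but the middle piece --- the only part of the lemma with real content --- is explicitly deferred (``where I expect most of the work to reside''), and the sketched mechanism inherits the same gap: transporting a finite-rank approximation of $K\hot_{{\rm ev}_1}1$ through $U$ and lifting kets and bras back only approximately produces pairs that do \emph{not} lie in $X\ti_U X'$, so the corresponding rank-one operators are not even defined on the glued module, and ``absorbing the errors into left/right correction terms'' is not a defined operation at the level of operators. So the proof is incomplete precisely at its crux. For comparison, the paper avoids all fibrewise matching of approximants by Kasparov's stabilization theorem: it constructs an adjointable isometry $S\colon X\ti_U X'\to \ell^2(\nn,C([0,1],B))$ by gluing stabilization unitaries for $X$ and $X'$ along a unitary $V$ of $\ell^2(\nn,B)$ determined by $U$; countable generation is then immediate, and compactness of $K\op K'$ reduces to checking that $S(K\op K')S^*$ is an operator-norm continuous path of compact operators on $\ell^2(\nn,B)$, with continuity at $t=1/2$ being exactly where the hypothesis $U(K\hot_{{\rm ev}_1}1)U^*=K'\hot_{{\rm ev}_0}1$ enters. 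If you want to keep your bare-hands route, you must (i) invoke the surjectivity of the evaluation maps to get exact bridges and (ii) carry out the interface-compatible finite-rank approximation in detail; neither is automatic.
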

\begin{proof}
We start by constructing an adjointable isometry $S\colon X \ti_U X' \to \ell^2(\nn, C([0,1],B))$, where $\ell^2(\nn,C([0,1],B))$ denotes the standard module over $C([0,1],B)$. Since $B$ is $\si$-unital by our standing assumptions this will imply that $X \ti_U X'$ is countably generated.

Since $X$ and $X'$ are countably generated over $C([0,1],B)$ it follows by Kasparov's stabilization theorem, \cite[Theorem 2]{Kas:HSV}, that we may find unitary isomorphisms of Hilbert $C^*$-modules
\begin{gather*}
W\colon \ X \op \ell^2(\nn, C([0,1],B)) \to \ell^2(\nn,C([0,1],B)) \q \text{and} \\
W'\colon \ X' \op \ell^2(\nn, C([0,1],B)) \to \ell^2(\nn,C([0,1],B)) .
\end{gather*}
We let $V\colon \ell^2(\nn,B) \to \ell^2(\nn,B)$ denote the unique unitary isomorphism of Hilbert $C^*$-modules, which makes the diagram here below commute:
\[
\begin{CD}
\big( X \hot_{{\rm ev}_1} B \big) \op \ell^2(\nn,B) @>{ U \op 1}>> \big( X' \hot_{{\rm ev}_0} B \big) \op \ell^2(\nn,B) \\
@V{\cong}VV @VV{\cong}V \\
\big( X \op \ell^2(\nn, C( [0,1],B)) \big) \hot_{{\rm ev}_1} B & & \big( X' \op \ell^2(\nn, C( [0,1],B)) \big) \hot_{{\rm ev}_0} B \\
@V{W \hot_{{\rm ev}_1} 1}VV @VV{W' \hot_{{\rm ev}_0} 1}V \\
\ell^2(\nn,C( [0,1],B)) \hot_{{\rm ev}_1} B & & \ell^2(\nn,C( [0,1],B)) \hot_{{\rm ev}_0} B \\
@V{\cong}VV @VV{\cong}V \\
\ell^2(\nn,B) @>>{V}> \ell^2(\nn,B).
\end{CD}
\]
We specify that the lower vertical isomorphisms are induced by $f \hot_{{\rm ev}_i} b \mapsto f(i) \cd b$, for $i \in \{0,1\}$, and the top vertical unitary isomorphisms come from the distributivity of the interior tensor product together with the lower vertical isomorphisms.

The notation $\io\colon X \to X \op \ell^2(\nn,C([0,1],B))$ and $\io'\colon X' \to X' \op \ell^2(\nn,C([0,1],B))$ refers to the standard inclusions given on matrix form as $\left(\begin{smallmatrix} 1 \\ 0\end{smallmatrix}\right)$. We define our adjointable isometry $S\colon X \ti_U X' \to \ell^2( \nn, C([0,1],B))$ by the formula
\[
S(\xi,\xi')(t) := \begin{cases}
V( W \io \xi)(2t) & \text{for} \ t \in [0,1/2], \\ (W' \io' \xi')(2t - 1) & \text{for} \ t \in [1/2,1].\end{cases}
\]
for all $(\xi,\xi') \in X \ti_U X'$ and $t \in [0,1]$. We leave it to the reader to verify that $S$ is well-defined (in particular that $V(W \io \xi)(1) = (W' \io'\xi')(0)$). The adjoint of $S$ is given explicitly by
\[
S^*(f) = ( \io^* W^* \si_0(f), (\io')^* (W')^*\be_1(f)),
\]
where $\si_0(f)(t) = V^* f(t/2)$ and $\be_1(f) = f((t + 1)/2)$ for all $f \in \ell^2(C([0,1],B))$ and $t \in [0,1]$.

The compactness of $K \op K'\colon X \ti_U X' \to X \ti_U X'$ is equivalent to the compactness of $S (K \op K') S^*\colon \ell^2(\nn,C([0,1],B)) \to \ell^2(\nn,C([0,1],B))$. The compact operators on the standard module $\ell^2(\nn,C([0,1],B))$ can be identified with the operator norm continuous maps from $[0,1]$ to the compact operators on $\ell^2(\nn,B)$. Using this identification, we compute that
\[
S (K \op K') S^* (t) = \begin{cases} V L(2t) V^* & \text{for} \ t \in [0,1/2], \\ L'(2t - 1) & \text{for} \ t \in [1/2,1] ,\end{cases}
\]
where $L := W \io K \io^* W^*$ and $L' := W' \io' K' (\io')^* (W')^*\colon \ell^2(\nn,C([0,1],B)) \to \ell^2(\nn,C([0,1],B))$. Since $L$ and $L'$ are compact by assumption this proves the compactness of $K \op K'\colon X \ti_U X' \to X \ti_U X'$.
\end{proof}

\begin{prop}\label{p:unbhom}
Homotopy of unbounded Kasparov modules is an equivalence relation.
\end{prop}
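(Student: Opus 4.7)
The plan is to verify the three axioms of an equivalence relation, with transitivity being the main obstacle that the glueing construction preceding Lemma~\ref{l:coucom} is designed to handle.

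\emph{Reflexivity.} Given $(X_0,D_0)$, I would form the $\zz/2\zz$-graded $C^*$-correspondence $X := X_0 \hot_\io C([0,1],B)$ from $A$ to $C([0,1],B)$, where $\io\colon B\to C([0,1],B)$ is the inclusion as constant functions, equipped with the operator $D := D_0 \hot_\io 1$. Since both the bounded commutator and the compact resolvent conditions pass through $\hot_\io 1$, the pair $(X,D)$ is an unbounded Kasparov module, and evaluation at $0$ or $1$ recovers $(X_0,D_0)$ up to unitary equivalence.

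\emph{Symmetry.} If $(X_0,D_0)\sim_h(X_1,D_1)$ via $(X,D)$, composition with the even $*$-automorphism $\phi\colon C([0,1],B)\to C([0,1],B)$, $\phi(f)(t) := f(1-t)$, gives the pair $\bigl(X\hot_\phi C([0,1],B),\, D\hot_\phi 1\bigr)$, which interchanges the two endpoint fibres and so provides the reverse homotopy.

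\emph{Transitivity.} Suppose $(X_0,D_0)\sim_h(X_1,D_1)$ via $(X,D)$ and $(X_1,D_1)\sim_h(X_2,D_2)$ via $(X',D')$. I would compose the endpoint unitary equivalences through $X_1$ to produce an even unitary $U\colon X\hot_{{\rm ev}_1} B \to X'\hot_{{\rm ev}_0} B$ that intertwines $D\hot_{{\rm ev}_1} 1$ with $D'\hot_{{\rm ev}_0} 1$, then form the glued $\zz/2\zz$-graded $C^*$-correspondence $X\ti_U X'$ and define $\wit D$ on the domain $\{(\xi,\xi')\in X\ti_U X' \mid \xi\in\operatorname{Dom}(D),\ \xi'\in\operatorname{Dom}(D')\}$ by $\wit D(\xi,\xi') := (D\xi,D'\xi')$; the intertwining by $U$ ensures that $(D\xi,D'\xi')\in X\ti_U X'$. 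Selfadjointness and regularity of $\wit D$ follow since $(\wit D\pm i)^{-1}$ is the restriction to $X\ti_U X'$ of $(D\pm i)^{-1} \op (D'\pm i)^{-1}$, and this direct sum preserves $X\ti_U X'$ because $U$ also intertwines the resolvents. For $a\in\sA$, the bounded extensions $d(a)$ on $X$ and $d'(a)$ on $X'$ agree under $U$ at the glued fibre (both correspond to the bounded extension of $[D_1,a]$ on $X_1$), so $d(a)\op d'(a)$ restricts to a bounded adjointable operator on $X\ti_U X'$ that extends $[\wit D,a]$. Similarly, the compact operators $a(i+D)^{-1}$ and $a(i+D')^{-1}$ match under $U$ at the glued fibre, and Lemma~\ref{l:coucom} yields that their restriction is a compact operator on $X\ti_U X'$, giving the compact resolvent condition for $\wit D$. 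Finally, the fibres $(X\ti_U X')\hot_{{\rm ev}_0} B$ and $(X\ti_U X')\hot_{{\rm ev}_1} B$ are naturally identified with $X\hot_{{\rm ev}_0} B$ and $X'\hot_{{\rm ev}_1} B$, respectively, yielding the required unitary equivalences with $(X_0,D_0)$ and $(X_2,D_2)$.

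The hard part is transitivity, and the main difficulty is the careful bookkeeping of the domain of $\wit D$ together with the verification that resolvents, bounded commutator extensions, and compact operators all restrict coherently to the glued correspondence. Lemma~\ref{l:coucom} has been set up precisely to package the compactness and countable-generation steps, so once the intertwining property of $U$ is exploited, the remaining algebraic and analytic compatibilities fall out by direct computation.
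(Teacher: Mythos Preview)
Your proposal is correct and follows essentially the same route as the paper: the constant homotopy for reflexivity (the paper uses $C([0,1],X)$ with pointwise $D$, which is unitarily equivalent to your $X_0\hot_\io C([0,1],B)$), the time-reversal automorphism for symmetry, and the glued correspondence $X\ti_U X'$ with the diagonal operator for transitivity, invoking Lemma~\ref{l:coucom} for countable generation and compactness of the resolvent. Your explicit remarks on why $\wit D$ is selfadjoint and regular via the restricted resolvents, and why the commutator extensions match at the glued fibre, are in fact slightly more detailed than what the paper spells out.
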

\begin{proof}{\it Reflexivity:} For an unbounded Kasparov module $(X,D)$ from $\s A$ to $B$, we have that $(X,D) \sim_h (X,D)$ via the unbounded Kasparov module $( C([0,1],X), E)$ from $\s A$ to $C([0,1],B)$, where $E\colon \operatorname{Dom}(E) \to C( [0,1],X)$ is the unbounded selfadjoint and regular operator defined by
\begin{gather*}
\operatorname{Dom}(E) := \{ \xi \in C( [0,1],X) \,|\, \xi(t) \in \operatorname{Dom}(D) \ \text{for all} \ t \in [0,1] \\
\hphantom{\operatorname{Dom}(E) := \{}{} \text{and} \ t \mapsto D \xi(t) \ \text{is continuous} \}, \\
E(\xi)(t) := D \xi(t) \qquad \text{for all } t \in [0,1] .
\end{gather*}

{\it Symmetry:} Suppose that two unbounded Kasparov modules $(X_0,D_0)$ and $(X_1,D_1)$ from~$\s A$ to~$B$ are homotopic via the unbounded Kasparov module $(X,D)$ from $\s A$ to $C( [0,1],B)$. Define the even $*$-automorphism $\be\colon C( [0,1],B) \to C( [0,1] , B)$ by $\be(f)(t) = f(1 - t)$ for all $f \in C( [0,1],B)$ and $t \in [0,1]$. Then it holds that $(X_1,D_1)$ and $(X_0,D_0)$ are homotopic via the unbounded Kasparov module $(X \hot_{\be} C( [0,1],B), D \hot_{\be} 1)$ from $\s A$ to $C([0,1],B)$.

{\it Transitivity:} Suppose that $(X_0,D_0)$, $(X_1,D_1)$ and $(X_1,D_1')$ are unbounded Kasparov modules from $\s A$ to $B$ such that $(X_0,D_0) \sim_h (X_1,D_1)$ and $(X_1,D_1) \sim_h (X_1',D_1')$ via the unbounded Kasparov modules $(X,D)$ and $(X', D')$, respectively. Let us choose an even unitary isomorphism of $C^*$-correspondences
\[
U\colon \ X \hot_{{\rm ev}_1} B \to X' \hot_{{\rm ev}_0} B
\]
such that $U (D \hot_{{\rm ev}_1} 1) U^* = D' \hot_{{\rm ev}_0} 1$. We study the associated $\zz/2\zz$-graded $C^*$-correspondence $X \ti_U X'$ from $A$ to $C([0,1],B)$ and notice that $X \ti_U X'$ is countably generated by Lemma~\ref{l:coucom}. We define the odd unbounded selfadjoint and regular operator $D''\colon \operatorname{Dom}(D'') \to X \ti_U X'$ by
\begin{gather*}
\operatorname{Dom}(D'') := \{ (\xi,\xi') \in X \ti_U X' \,|\, \xi \in \operatorname{Dom}(D) , \, \xi' \in \operatorname{Dom}(D') \}, \\
D''(\xi,\xi') := (D \xi, D' \xi') .
\end{gather*}
To see that $D''$ is indeed selfadjoint and regular, we notice that~$D''$ is symmetric and that the resolvents are given by
\[
(D'' + \la i )^{-1} (\xi,\xi') = \big( (D + \la i )^{-1}(\xi), (D' + \la i )^{-1}(\xi')\big)
\]
for all $\la \in \rr \setminus \{0\}$ and all $(\xi,\xi') \in X \ti_U X'$.

We claim that $(X \ti_U X',D'')$ is an unbounded Kasparov module from $\s A$ to $C([0,1],B)$. It is indeed clear that each $a \in \s A$ preserves the domain of $D''$ and that the graded commutator $[ D'', a ]\colon \operatorname{Dom}(D'') \to X \ti_U X'$ extends to a bounded adjointable operator on $X \ti_U X'$. Moreover, it follows from Lemma~\ref{l:coucom} that $a \cd (i + D'')^{-1}\colon X \ti_U X' \to X \ti_U X'$ is a compact operator for all $a \in A$.

The unbounded Kasparov module $(X \ti_U X',D'')$ from $\s A$ to $C( [0,1],B)$ implements the homotopy from $(X_0, D_0)$ to $(X_1',D_1')$ and this ends the proof of the proposition.
\end{proof}

Let us shortly discuss the notion of bounded perturbations of unbounded Kasparov modules:

\begin{dfnprop}\label{p:bddper}
Let $(X,D)$ be an unbounded Kasparov module from $\s A$ to~$B$ and let $T\colon X \to X$ be an odd bounded selfadjoint operator. Then the pair $(X, D + T)$ is an unbounded Kasparov module from $\s A$ to $B$ and $(X,D)$ and $(X, D + T)$ are homotopic. We say that $(X, D + T)$ is a \emph{bounded perturbation} of $(X,D)$.
\end{dfnprop}
\begin{proof}
The domain of $D + T$ agrees with the domain of $D$ and $D + T\colon \operatorname{Dom}(D) \to X$ is selfadjoint and regular by \cite[Example 1]{Wor:UAQ}. The commutator condition in Definition \ref{d:unbkas} is immediately verified and the resolvent condition in Definition \ref{d:unbkas} follows from the resolvent identity:
\[
a \cd (i + D + T)^{-1} = a \cd (i + D)^{-1} - a \cd (i + D)^{-1} T (i + D + T)^{-1} .
\]
We thus have that $(X, D + T)$ is an unbounded Kasparov module from $\s A$ to $B$. To see that $(X,D)$ and $(X, D + T)$ are homotopic we apply the unbounded Kasparov module from $\s A$ to $C( [0,1],B)$ given by $( C( [0,1],X), E)$ where the corresponding odd unbounded selfadjoint and regular operator is defined by
\begin{gather*}
\operatorname{Dom}(E) := \{ \xi \in C( [0,1],X) \,|\, \xi(t) \in \operatorname{Dom}(D) \ \text{for all} \ t \in [0,1] \\
\hphantom{\operatorname{Dom}(E) := \{}{} \text{and} \ t \mapsto D \xi(t) \ \text{is continuous} \},\\
E(\xi)(t) := (D + t \cd T)\xi(t) \qquad \text{for all } t \in [0,1] . \tag*{\qed}
\end{gather*}\renewcommand{\qed}{}
\end{proof}

We remark that the above notion of bounded perturbations yields an equivalence relation on unbounded Kasparov modules. We will not discuss this equivalence relation any further at this point but refer the reader to~\cite{Kaa:MEU} for more details.

In this paper the relevant equivalence relation on unbounded Kasparov modules is a stabilized version of homotopies of unbounded Kasparov modules, where we are using spectrally decomposable unbounded Kasparov modules in the stabilization procedure:

\begin{dfn}
Two unbounded Kasparov modules $(X_0,D_0)$ and $(X_1,D_1)$ from $\s A$ to $B$ are \emph{stably homotopic} when there exist two spectrally decomposable unbounded Kasparov modules $(Y_0,D_0')$ and $(Y_1,D_1')$ from $\s A$ to $B$ such that
\[
(X_0 \op Y_0,D_0 \op D_0') \sim_h (X_1 \op Y_1, D_1 \op D_1') .
\]
In this case we write $(X_0,D_0) \sim_{sh} (X_1,D_1)$.
\end{dfn}

We remark that the relation $\sim_{sh}$ is indeed an equivalence relation and that this can be verified using Proposition \ref{p:unbhom} together with the fact that the direct sum of two spectrally decomposable unbounded Kasparov modules is again spectrally decomposable.

\begin{dfn}
The \emph{topological unbounded $KK$-theory} from $\s A$ to $B$ consists of the unbounded Kasparov modules from $\s A$ to $B$ modulo stable homotopies, thus modulo the equivalence relation~$\sim_{sh}$. The topological unbounded $KK$-theory from $\s A$ to $B$ is denoted by $UK^{{\rm top}}(\s A, B)$.
\end{dfn}

We may equip the topological unbounded $KK$-theory from $\s A$ to $B$ with the structure of a~commutative monoid, where the addition is induced by the direct sum operation from Definition~\ref{d:dirsum} and the neutral element is the class of the zero module $(0,0)$. We shall see in Section~\ref{s:group} that $UK^{{\rm top}}(\s A,B)$ is in fact an abelian group.

The next result is a combination of Theorem \ref{t:bousur} and Lemma \ref{l:spezer} together with the observation that the Baaj--Julg bounded transform is compatible with direct sums and homotopies of unbounded Kasparov modules.

\begin{Theorem}\label{t:baajul}
The Baaj--Julg bounded transform $(X,D) \mapsto \big[ X, D\big(1 + D^2\big)^{-1/2}\big]$ induces a~well-defined surjective homomorphism
\[
\C F\colon \ UK^{{\rm top}}(\s A, B) \to KK(A,B) ,
\]
which we also refer to as the Baaj--Julg bounded transform.
\end{Theorem}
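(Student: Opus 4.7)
The plan is to check successively that $\C F$ is well defined on stable homotopy classes, that it is additive, and that it is surjective, using Theorem~\ref{t:bousur} and Lemma~\ref{l:spezer} as the main inputs. I first reduce well-definedness to three compatibilities of the map $(X,D) \mapsto \big(X, D\big(1+D^2\big)^{-1/2}\big)$: invariance under unitary equivalence (immediate, since $U$ intertwines the bounded transforms via continuous functional calculus), compatibility with direct sums, and compatibility with homotopies. Additivity with direct sums is straightforward: because $D \op D'$ acts diagonally on $X \op X'$, the functional calculus $t \mapsto t\big(1+t^2\big)^{-1/2}$ applied to $D \op D'$ is the direct sum of the individual bounded transforms, so $\C F\big((X,D) + (X',D')\big) = \C F(X,D) + \C F(X',D')$ in $KK(A,B)$.

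The central technical step is compatibility with homotopies. Given an unbounded Kasparov module $(X,D)$ from $\s A$ to $C([0,1],B)$, I claim that $\big(X, D\big(1+D^2\big)^{-1/2}\big)$ is a Kasparov module from $A$ to $C([0,1],B)$ whose evaluations at $0$ and $1$ recover the bounded transforms of $\big(X \hot_{{\rm ev}_t} B, D \hot_{{\rm ev}_t} 1\big)$ up to unitary equivalence. The Kasparov module property is guaranteed by the Baaj--Julg theorem, so the issue is really that interior tensoring commutes with the bounded transform; that is, I must show
\[
\bigl(D\big(1+D^2\big)^{-1/2}\bigr) \hot_{{\rm ev}_t} 1 \;=\; \bigl(D \hot_{{\rm ev}_t} 1\bigr)\bigl(1 + (D \hot_{{\rm ev}_t} 1)^2\bigr)^{-1/2}
\]
on $X \hot_{{\rm ev}_t} B$. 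This follows from the resolvent identity $\big(D \hot_{{\rm ev}_t} 1 + i\la\big)^{-1} = \big(D + i\la\big)^{-1} \hot_{{\rm ev}_t} 1$ recorded in the excerpt, combined with the integral representation $t\big(1+t^2\big)^{-1/2} = \frac{1}{\pi}\int_0^\infty \la^{-1/2} \bigl( t(1+\la)^{-1}(t^2 + 1 + \la)^{-1}\bigr) \, d\la$ (or any analogous functional-calculus formula), which expresses the bounded transform as a norm-convergent integral of resolvents that are manifestly compatible with the $*$-homomorphism ${\rm ev}_t$. Once this identity is in place, the bounded transform sends the unbounded homotopy to a homotopy in the sense of Section~\ref{s:boukas}.

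With these tools assembled, well-definedness on $\sim_{sh}$-classes follows: if $(X_0,D_0) \sim_{sh} (X_1,D_1)$ with spectrally decomposable stabilizers $(Y_0,D_0')$ and $(Y_1,D_1')$, then additivity plus the homotopy step gives
\[
\C F(X_0,D_0) + \C F(Y_0,D_0') \;=\; \C F(X_1,D_1) + \C F(Y_1,D_1') \qquad \text{in } KK(A,B),
\]
and Lemma~\ref{l:spezer} annihilates both stabilizer contributions, yielding $\C F(X_0,D_0) = \C F(X_1,D_1)$. The fact that $\C F$ is a monoid (hence group, once we know the target is a group) homomorphism is then just the additivity noted above, together with $\C F(0,0) = 0$.

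For surjectivity, I would invoke a standard reduction in $KK$-theory: every class $[X,F] \in KK(A,B)$ admits a representative with $F = F^*$ and $F^2 = 1$ (one first passes to the ``standard form'' by adding a degenerate module and then uses a selfadjoint/unitary perturbation that is homotopic to $F$). Applied to such a representative, Theorem~\ref{t:bousur} produces an unbounded Kasparov module $(X,D)$ with $\C F(X,D) = [X,F]$, proving that $\C F$ hits every class. The main obstacle in the plan is really the verification that the bounded transform commutes with the change-of-base functor well enough to transport unbounded homotopies to bounded ones; once that is granted, the rest is a bookkeeping exercise combining additivity with Lemma~\ref{l:spezer} and Theorem~\ref{t:bousur}.
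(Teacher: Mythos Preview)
Your proposal is correct and matches the paper's own argument, which is stated in a single sentence preceding the theorem: the result is ``a combination of Theorem~\ref{t:bousur} and Lemma~\ref{l:spezer} together with the observation that the Baaj--Julg bounded transform is compatible with direct sums and homotopies of unbounded Kasparov modules.'' You have simply unpacked that observation in detail; one minor remark is that the specific integral you wrote for $t(1+t^2)^{-1/2}$ is not quite right, but as you note any resolvent-based identity (or directly the fact that $T \mapsto T \hot_{{\rm ev}_t} 1$ is a $*$-homomorphism intertwining the functional calculi) suffices.
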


The main result of this paper is that the surjective homomorphism $\C F\colon UK^{{\rm top}}(\s A,B) \to KK(A,B)$ is in fact an isomorphism. In particular, it holds that $UK^{{\rm top}}(\s A,B)$ is independent of the norm-dense $\zz/2\zz$-graded $*$-subalgebra $\s A \su A$ as long as $\s A$ is countably generated. This will be proved in Section \ref{s:injbou}.

\section{Lipschitz regularity and invertibility}
We shall now see that given a class in topological unbounded $KK$-theory one may always choose a~Lipschitz regular representative $(X,D)$ with the extra property that the unbounded selfadjoint and regular operator $D\colon \operatorname{Dom}(D) \to X$ is invertible (so that $D^{-1}\colon X \to X$ is a~bounded adjointable operator with image equal to the domain of $D$).

We start with Lipschitz regularity:

\begin{prop}\label{p:liphom}Let $r \in (0,1/2)$ and suppose that $(X,D)$ is an unbounded Kasparov module from~$\s A$ to~$B$. Then $\big(X,D\big(1 + D^2\big)^{-r}\big)$ is a Lipschitz regular unbounded Kasparov module from~$\s A$ to~$B$ and it holds that
\[
\big(X, D\big(1 + D^2\big)^{-r}\big) \sim_h (X,D) .
\]
\end{prop}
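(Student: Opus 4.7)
My plan is to verify three things in turn: (i) that $\tilde D := D(1+D^2)^{-r}$ together with $X$ defines an unbounded Kasparov module from $\s A$ to $B$, (ii) that this module is Lipschitz regular, and (iii) that it is homotopic to $(X,D)$.

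For (i), selfadjointness and regularity of $\tilde D$ follow from the continuous functional calculus on $D$ applied to the real-valued function $s \mapsto s(1+s^2)^{-r}$. The commutator condition I would verify via the integral representation
\[
(1+D^2)^{-r} = \frac{\sin(\pi r)}{\pi} \int_0^\infty \lambda^{-r}(1+\lambda+D^2)^{-1}\,d\lambda,
\]
which converges in operator norm for $r \in (0,1)$. Combining this with the identity $[(1+\lambda+D^2)^{-1},a] = -(1+\lambda+D^2)^{-1}[D^2,a](1+\lambda+D^2)^{-1}$ (where $[D^2,a]$ splits into terms built from $D$ and $d(a)$) and the estimate $\|D(1+\lambda+D^2)^{-1}\| \leq (2\sqrt{1+\lambda})^{-1}$ produces absolute convergence of the resulting commutator integral. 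For the compactness condition, $(i+\tilde D)^{-1}$ is a continuous function of $D$ vanishing at infinity (here the assumption $r<1/2$ is essential, since it forces $s(1+s^2)^{-r} \to \pm\infty$), hence lies in the $C^*$-algebra generated by $(i+D)^{-1}$; multiplying by $a \in \s A$ then yields a compact operator.

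Part (ii) is the main obstacle, since $|\tilde D| = |D|(1+D^2)^{-r}$ inherits from $|D|$ a corner singularity at the origin that the integral technique used in (i) does not directly handle. My approach would be the decomposition
\[
|D|(1+D^2)^{-r} = (1+D^2)^{1/2-r} - h(D), \q h(s) := \big((1+s^2)^{1/2} - |s|\big)(1+s^2)^{-r},
\]
noting that $h(s) = (1+s^2)^{-r-1/2}/\big(1 + |s|(1+s^2)^{-1/2}\big)$ is bounded and continuous (decaying at infinity like $|s|^{-1-2r}$). For the first summand I would apply the integral technique from (i) now to the exponent $1/2 - r \in (0,1/2)$, which is precisely where the resulting commutator integral still converges absolutely. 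For $[h(D),a]$ I would further factor $h(D) = (1+D^2)^{-r-1/2}(1+|F|)^{-1}$ with $F := D(1+D^2)^{-1/2}$, bound $[(1+D^2)^{-r-1/2},a]$ by the same integral technique, and reduce $[(1+|F|)^{-1},a]$ via $[(1+|F|)^{-1}, a] = -(1+|F|)^{-1}[|F|,a](1+|F|)^{-1}$ to an analysis of $[|F|,a]$ through an integral representation for $|F| = (F^*F)^{1/2}$ applied to the bounded positive operator $F^*F = D^2(1+D^2)^{-1}$ and the bounded commutator $[F,a]$.

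Finally, for (iii) I would realize the homotopy on the $C^*$-correspondence $Y := C([0,1],X)$ from $A$ to $C([0,1],B)$ with pointwise left action. The operator $E$ is defined via joint continuous functional calculus on the commuting operators $D\otimes 1$ on $Y$ and multiplication by the coordinate $t \in C([0,1])$, applied to the real-valued function $g(s,\tau) := s(1+s^2)^{-r\tau}$, so that $E\xi(t) = D(1+D^2)^{-rt}\xi(t)$ fiberwise; $E$ is then automatically odd, selfadjoint and regular. The commutator and compact-resolvent conditions for $(Y,E)$ reduce to the analysis in (i), with the additional checks that the relevant integral estimates are uniform in $t\in[0,1]$ and that $t \mapsto (i + D(1+D^2)^{-rt})^{-1}$ is norm-continuous on $[0,1]$. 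Evaluating at $t=0$ and $t=1$ then recovers $D$ and $\tilde D$, respectively, giving the required homotopy.
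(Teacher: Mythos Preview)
Your overall strategy matches the paper's: integral representations of fractional powers to control commutators, and the homotopy $t\mapsto D(1+D^2)^{-rt}$ on $C([0,1],X)$. The genuine difference is in part~(ii). The paper replaces $|\tilde D|$ by $D(1+D^2)^{-1/2}\cdot D(1+D^2)^{-r}$ (up to a bounded correction, since $|x|-x^2(1+x^2)^{-1/2}$ is bounded) and then splits the commutator into three pieces, the hardest being $D\big[(1+D^2)^{-1/2},a\big]D(1+D^2)^{-r}$, which is handled by the integral formula for the negative power $-1/2$ together with the extra decay from $(1+D^2)^{-r}$. Your decomposition $|\tilde D|=(1+D^2)^{1/2-r}-h(D)$ is a legitimate and in some ways cleaner alternative, but two points in your execution need adjusting.

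First, the phrase ``apply the integral technique from (i) now to the exponent $1/2-r$'' is imprecise: the formula in (i) represents \emph{negative} powers, while $(1+D^2)^{1/2-r}$ is a positive power. You need the companion formula $x^q=\frac{\sin(\pi q)}{\pi}\int_0^\infty \lambda^{q-1}\,x(x+\lambda)^{-1}\,d\lambda$ for $q\in(0,1)$, which after taking commutators gives an integrand bounded by $\lambda^{q}(1+\lambda)^{-3/2}$; this converges precisely for $q=1/2-r\in(0,1/2)$, so the idea does go through once stated correctly.

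Second, your treatment of $[h(D),a]$ is unnecessary and in fact contains a step that fails. Since $h$ is bounded, $h(D)$ is a bounded operator and hence $[h(D),a]$ is trivially bounded; no further analysis is needed. By contrast, your proposed route through $[|F|,a]$ via the square-root integral for the bounded operator $F^2$ does \emph{not} converge absolutely: with $\|(F^2+\lambda)^{-1}\|=\lambda^{-1}$ the integrand is of order $\lambda^{-3/2}$ near $0$. (Of course $[|F|,a]$ is bounded simply because $|F|$ and $a$ are bounded, but the integral argument you sketch does not establish this.)

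A smaller point on (iii): for $(Y,E)$ to be an unbounded Kasparov module over $C([0,1],B)$, the fibrewise commutators $t\mapsto \overline{[D(1+D^2)^{-rt},a]}$ must define a bounded adjointable operator on $C([0,1],X)$, which requires strict continuity in $t$, not just uniform boundedness. The paper checks this explicitly; you should flag it as well.
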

\begin{proof}For each $t \in [0,1]$, define the functions
\[
f_t^{\pm}\colon \ \B R \to \B C, \qquad f_t^{\pm}(x) := \big( i \pm x \big(1 + x^2\big)^{-tr} \big)^{-1} .
\]
We remark that $f_t^{\pm} \in C_0( \B R )$ and that the maps $[0,1] \to C_0(\B R)$ given by $t \mapsto f_t^{\pm}$ are continuous with respect to the supremum norm on $C_0( \B R)$. Notice in this respect that we have the estimates $| f_t^{\pm}(x) | \leq 2^r |x|^{2r - 1}$ whenever $|x| \geq 1$ and $t \in [0,1]$.

For each $a \in A$, we thus have that
\[
a \cd \big(i \pm D\big(1 + D^2\big)^{-tr}\big)^{-1} = a \cd f_t^{\pm}(D)\colon \ X \to X
\]
are compact operators for all $t \in [0,1]$ and that the maps $[0,1] \to \B L(X)$ given by $t \mapsto \big(i \pm D\big(1 + D^2\big)^{-tr}\big)^{-1}$ are continuous in operator norm.

For $p \in (0,1/2]$, we are going to apply the integral formula
\begin{gather}\label{eq:intpow}
\big(1 + D^2\big)^{-p} = \frac{\sin(p \pi)}{\pi} \int_0^\infty \la^{-p} \big(1 + \la + D^2\big)^{-1} d\la ,
\end{gather}
where the integrand is continuous in operator norm and the integral converges absolutely in operator norm.

Let $a \in \s A$ be homogeneous. For each $t \in [0,1]$, the domain of $D$ is a core for $D\big(1 + D^2\big)^{-tr}$ and for each $\xi \in \operatorname{Dom}(D)$, we compute the graded commutator
\begin{gather}\label{eq:comsum}
\big[ D \big(1 + D^2\big)^{- tr} , a \big](\xi) = \big(1 + D^2\big)^{-tr} [D,a] (\xi) + (-1)^{\deg (a)}\big[ \big(1 + D^2\big)^{-tr},a\big] D(\xi) .
\end{gather}
The first term extends to the bounded adjointable operator $\big(1 + D^2\big)^{-tr} d(a)\colon X \to X$ and we remark that the map $[0,1] \to \B L(X)$ defined by $t \mapsto \big(1 + D^2\big)^{-tr} d(a)$ is continuous with respect to the strict operator topology on $\B L(X)$. The second term in equation~\eqref{eq:comsum} is more complicated and, using the integral formula in equation~\eqref{eq:intpow}, we obtain the expression
\begin{gather}
\big[ \big(1 + D^2\big)^{-tr},a\big] D \nonumber\\
\qquad{} = (-1)^{\deg (a) + 1} \frac{\sin(tr \pi)}{\pi} \int_0^\infty \la^{-tr} \big(1 + \la + D^2\big)^{-1} d(a) D^2 \big(1 + \la + D^2\big)^{-1} d\la \nonumber\\
\qquad\quad{} - \frac{\sin(tr \pi)}{\pi} \int_0^\infty \la^{-tr} D \big(1 + \la + D^2\big)^{-1} d(a) D \big(1 + \la + D^2\big)^{-1} d\la\label{eq:intder}
\end{gather}
for all $t \in (0,1]$, where the left hand side only makes sense on $\operatorname{Dom}(D)$. The right hand side does however make sense as a bounded adjointable operator on $X$ and the operator norm is dominated by
\[
\frac{2 \sin(tr \pi) \| d(a) \| }{\pi} \int_0^\infty \la^{-tr} (1 + \la)^{-1} d\la = 2 \| d(a) \| .
\]
For each $t \in [0,1]$ we denote the bounded adjointable extension of $\big[\big(1 + D^2\big)^{-tr},a\big] D\colon \operatorname{Dom}(D) \allowbreak \to X$ by $G_t(a)\colon X \to X$. Notice that $G_0(a) = 0$ and that $G_t(a)$ is given explicitly by the right hand side of equation \eqref{eq:intder} for $t \in (0,1]$. In particular, it holds that $\| G_t(a) \| \leq 2 \| d(a) \|$ for all $t \in [0,1]$. Using the identity in equation~\eqref{eq:comsum} one may also verify that
\begin{equation}\label{eq:adjstr}
G_t(a)^* + (-1)^{\deg (a)}G_t(a^*) = \big[ d(a^*), \big(1 + D^2\big)^{-tr} \big]
\end{equation}
for all $t \in [0,1]$. We claim that the map $[0,1] \to \B L(X)$ given by $t \mapsto G_t(a)$ is strictly continuous. Since $\s A$ is a $*$-algebra and since the right hand side of the identity in equation~\eqref{eq:adjstr} defines a strictly continuous map on $[0,1]$ we only need to show that the map $t \mapsto G_t(a) \xi$ is norm continuous for every $\xi \in X$. In fact, because of the uniform bound on the operator norm of~$G_t(a)$ for $t \in [0,1]$ and the density of $\operatorname{Dom}(D)$ in~$X$, we may restrict our attention to elements $\xi \in \operatorname{Dom}(D)$. But for $\xi \in \operatorname{Dom}(D)$ the norm continuity of the map $t \mapsto G_t(a) \xi$ follows since $t \mapsto \big[ \big(1 + D^2\big)^{-tr}, a\big]$ is strictly continuous and since $G_t(a) \xi = \big[ \big(1 + D^2\big)^{-tr},a\big] D \xi$ for all $t \in [0,1]$.

Our efforts so far can be summarized as follows: we have an unbounded Kasparov module $( C([0,1],X), E)$ from $\s A$ to $C([0,1],B)$, where the unbounded selfadjoint and regular operator $E\colon \operatorname{Dom}(E) \to C( [0,1],X)$ is defined by
\begin{gather*}
\operatorname{Dom}(E) := \big\{ \xi \in C( [0,1],X) \,|\, \xi(t) \in \operatorname{Dom}\big( D(1 + D^2)^{-tr}\big) \ \text{for all} \ t \in [0,1] \\
\hphantom{\operatorname{Dom}(E) :=\big\{}{} \text{and} \ t \mapsto D\big(1 + D^2\big)^{-tr} \xi(t) \ \text{is continuous} \big\}, \\
E(\xi)(t) := D\big(1 + D^2\big)^{-tr} \xi(t) \q \text{for all } t \in [0,1] .
\end{gather*}
In particular, we have that the unbounded Kasparov modules $(X,D)$ and $\big(X, D\big(1 + D^2\big)^{-r}\big)$ are homotopic.

To finish the proof of the proposition we only need to argue that the unbounded Kasparov module $\big(X,D\big(1 + D^2\big)^{-r}\big)$ is Lipschitz regular. Let $a \in \s A$ be homogeneous. Since the function $x \mapsto |x| - x^2 \big(1 + x^2\big)^{-1/2}$ is bounded on $\B R$, we just have to prove that the graded commutator
\[
\big[ D\big(1 + D^2\big)^{-1/2} D\big(1 + D^2\big)^{-r}, a \big]\colon \ \operatorname{Dom}(D) \to X
\]
extends to a bounded operator on~$X$. Notice in this respect that
\[ D\big(1 + D^2\big)^{-1/2} D\big(1 + D^2\big)^{-r}\colon \ \operatorname{Dom}\big(|D|^{1 - 2r}\big) \to X\] agrees with
\[ \big|D\big(1 + D^2\big)^{-r}\big| = |D|\big(1 + D^2\big)^{-r}\colon \ \operatorname{Dom}\big(|D|^{1-2r}\big) \to X\] up to a bounded selfadjoint operator and moreover that $\operatorname{Dom}(D) \su X$ is a core for $\big|D \big(1 + D^2\big)^{-r}\big|$. Since we already know that
\[
[D,a] D \big(1 + D^2\big)^{-r - 1/2} \ \text{and} \ D\big(1 + D^2\big)^{-1/2} \big[D\big(1 + D^2\big)^{-r}, a\big]\colon \ \operatorname{Dom}(D) \to X
\]
extend to bounded operators on $X$ we are left with proving that
\[
D \big[ \big(1 + D^2\big)^{-1/2}, a\big] D\big(1 + D^2\big)^{-r}\colon \ \operatorname{Dom}(D) \to X
\]
extends to a bounded operator on~$X$. But this follows since the integral formula in equation~\eqref{eq:intpow} implies that
\begin{gather}
 D \big[ \big(1 + D^2\big)^{-1/2}, a\big] D\big(1 + D^2\big)^{-r} \nonumber\\
\qquad{} = (-1)^{\deg (a) + 1} \frac{1}{\pi} \int_0^\infty \la^{-1/2} D \big(1 + \la + D^2\big)^{-1} d(a) D^2 \big(1 + \la + D^2\big)^{-1} \big(1 + D^2\big)^{-r} d\la \nonumber\\
\qquad\quad{}-\frac{1}{\pi} \int_0^\infty \la^{-1/2} D^2 \big(1 + \la + D^2\big)^{-1} d(a) D\big(1 + \la + D^2\big)^{-1}\big(1 + D^2\big)^{-r} d\la ,\label{eq:lipcom}
\end{gather}
where the left hand side only makes sense on $\operatorname{Dom}\big(|D|^{1-2r}\big)$, but the right hand side makes sense as a bounded operator on $X$. Indeed, both of the integrals in equation~\eqref{eq:lipcom} have operator norm continuous integrands and converge absolutely because of the operator norm estimates
\begin{gather*}
 \big\| \la^{-1/2} D \big(1 + \la + D^2\big)^{-1} d(a) D^2 \big(1 + \la + D^2\big)^{-1} \big(1 + D^2\big)^{-r} \big\| , \\
 \big\| \la^{-1/2} D^2 \big(1 + \la + D^2\big)^{-1} d(a) D\big(1 + \la + D^2\big)^{-1}\big(1 + D^2\big)^{-r} \big\| \\
 \qquad{} \leq \| d(a) \| \cd \la^{-1/2} (1 + \la)^{-1/2 - r} ,
\end{gather*}
which are valid for all $\la \in (0,\infty)$.
\end{proof}

We continue with invertibility:

\begin{prop}\label{p:lipinv}
Suppose that $(X,D)$ is an unbounded Kasparov module from~$\s A$ to~$B$. Then $(X,D)$ is homotopic to a Lipschitz regular unbounded Kasparov module $(X', D')$ with $D'\colon$ $\operatorname{Dom}(D') \to X'$ invertible.
\end{prop}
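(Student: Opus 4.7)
The plan is to combine Proposition \ref{p:liphom} with a cone-type doubling construction that simultaneously enlarges the Hilbert module by one copy and adjoins a constant off-diagonal term rendering the Dirac-type operator invertible. First, applying Proposition \ref{p:liphom} with some $r \in (0,1/2)$, I would replace $(X,D)$ by the homotopic Lipschitz regular representative $(X, \tilde D) := (X, D(1+D^2)^{-r})$, so it suffices to produce a Lipschitz regular and invertible unbounded Kasparov module homotopic to $(X, \tilde D)$.

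The target representative is $(X \oplus X, D')$, where $X \oplus X$ carries the grading $\ga \oplus (-\ga)$ and the $A$-action $\pi \oplus 0$ (trivial on the second summand), and where
\[
D' := \begin{pmatrix} \tilde D & 1 \\ 1 & -\tilde D \end{pmatrix} \colon \operatorname{Dom}(\tilde D) \oplus \operatorname{Dom}(\tilde D) \to X \oplus X
\]
is odd and selfadjoint regular, being a bounded adjointable selfadjoint perturbation of the selfadjoint regular operator $\tilde D \oplus (-\tilde D)$. Since $(D')^2 = (1 + \tilde D^2) \oplus (1 + \tilde D^2) \geq 1$, the operator $D'$ is invertible. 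The compact-resolvent and bounded-commutator conditions reduce, via the block structure, to the corresponding conditions for $\tilde D$, using that $A$ acts trivially on the second summand. Lipschitz regularity of $D'$ follows from Lipschitz regularity of $\tilde D$, using that $|D'| = (1 + \tilde D^2)^{1/2} \oplus (1 + \tilde D^2)^{1/2}$, that $(1 + \tilde D^2)^{1/2} - |\tilde D|$ is bounded on $X$, and that the $A$-action is trivial on the second summand.

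To exhibit the homotopy $(X, \tilde D) \sim_h (X \oplus X, D')$, I would work on the Hilbert $C([0,1], B)$-module $Z := C([0,1], X) \oplus Y$, where $Y := \{f \in C([0,1], X) \,|\, f(1) = 0\}$ carries the grading $-\ga$ fiberwise and the zero $A$-action, and consider the odd operator $E$ on $Z$ acting fiberwise as $\bigl(\begin{smallmatrix} \tilde D & 1-t \\ 1-t & -\tilde D \end{smallmatrix}\bigr)$. The $t$-dependent off-diagonal coefficient is crucial, since multiplication by $(1-t)$ is what maps $C([0,1], X)$ into $Y$, ensuring that $E$ preserves $Z$. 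The fiber at $t = 0$ then recovers $(X \oplus X, D')$, while at $t = 1$ the second summand of $Z$ collapses to zero and one is left with $(X, \tilde D)$.

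The main technical obstacle is verifying that $(Z, E)$ is an unbounded Kasparov module from $\s A$ to $C([0,1], B)$. Regularity and selfadjointness of $E$ reduce to those of the fiberwise direct sum $\tilde D \oplus (-\tilde D)$ on $Z$ by a bounded adjointable perturbation argument. I would then need to check that $Y$ is a countably generated Hilbert $C([0,1], B)$-module (which follows from the identification $Y \cong X \hot_B C_0([0,1), B)$ together with $\si$-unitality of $B$) and that the restriction of $-\tilde D$ to $Y$ is selfadjoint and regular; surjectivity of $-\tilde D \pm i$ on $Y$ follows because the pointwise resolvent of $\tilde D$ is bounded and preserves the vanishing condition at $t = 1$. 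Finally, the compact resolvent condition $a(i+E)^{-1} \in \C K(Z)$ reduces, via the resolvent identity and the triviality of the $A$-action on $Y$, to the compactness of $\pi(a)(i+\tilde D)^{-1}$ on $X$.
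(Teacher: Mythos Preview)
Your argument is correct and lands on exactly the same representative as the paper, namely
\[
\Bigl(X \oplus \wit X,\; \begin{pmatrix} \tilde D & 1 \\ 1 & -\tilde D \end{pmatrix}\Bigr),
\]
with $\wit X$ carrying the opposite grading and the trivial $A$-action. The only difference is in how the homotopy is organized. The paper factors it into two separate steps: first it shows that $(\wit X,-\tilde D)\sim_h(0,0)$ via the cone $C_0((0,1],\wit X)$ (which is immediate because the left action is zero), and then it invokes the bounded-perturbation homotopy of Proposition~\ref{p:bddper} to pass from $\tilde D\oplus(-\tilde D)$ to the invertible block matrix. You instead fuse both moves into a single homotopy on $C([0,1],X)\oplus C_0([0,1),X)$ by letting the off-diagonal entry be $(1-t)$, so that the second summand collapses and the perturbation switches off simultaneously as $t\to 1$.

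Both routes are equally valid. The paper's decomposition is marginally tidier because each piece appeals to a mechanism already isolated in the text (cone over a degenerate module; bounded perturbation), whereas your version avoids one use of transitivity at the cost of having to verify directly that the hybrid module $(Z,E)$ is an unbounded Kasparov module over $C([0,1],B)$. Your sketch of that verification is adequate; the only point worth tightening is the countable generation of $Y=C_0([0,1),X)$ as a Hilbert $C([0,1],B)$-module, which follows most cleanly from Kasparov's stabilization theorem applied to $C([0,1],X)$ rather than from the tensor-product identification you mention.
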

\begin{proof}By Proposition \ref{p:liphom} we may assume that $(X,D)$ is already Lipschitz regular. Let us denote the $\zz/2\zz$-grading operator on $X$ by $\ga\colon X \to X$. Define the $\zz/2\zz$-graded $C^*$-corresponden\-ce~$\wit{X}$ from $A$ to $B$ which agrees with~$X$ as a~Hilbert $C^*$-module over~$B$, but $\wit{X}$ has grading operator $-\ga\colon \wit{X} \to \wit{X}$ and the left action of $A$ is trivial. Then the unbounded Kasparov module $\big(\wit{X},-D\big)$ is homotopic to the zero module $(0,0)$. Indeed, we may consider the $\zz/2\zz$-graded $C^*$-correspondence $C_0\big( (0,1],\wit{X}\big)$ from $A$ to $C( [0,1],B)$ equipped with the odd unbounded selfadjoint and regular operator $E\colon \operatorname{Dom}(E) \to C_0\big( (0,1],\wit{X}\big)$ defined by
\begin{gather*}
\begin{split}
& \operatorname{Dom}(E) := \big\{ \xi \in C_0\big( (0,1],\wit{X}\big) \,|\, \xi(t) \in \operatorname{Dom}(D) \ \text{for all} \ t \in (0,1] \\
& \hphantom{\operatorname{Dom}(E) := \big\{}{} \text{and} \ t \mapsto - D \xi(t) \ \text{is continuous and vanishes at zero} \big\},\\
& E(\xi)(t) := - D \xi(t) \qquad \text{for all } t \in (0,1] .
\end{split}
\end{gather*}
Since the left action of $A$ on $C_0\big( (0,1],\wit{X}\big)$ is trivial we have that $\big( C_0\big( (0,1],\wit{X}\big), E\big)$ is an unbounded Kasparov module from $\s A$ to $C([0,1],B)$ thus realizing the homotopy from $\big(\wit{X},-D\big)$ to $(0,0)$. The result of the proposition now follows from Proposition~\ref{p:bddper} by noting that $(X,D) + \big(\wit{X}, -D\big) = \big(X \op \wit{X}, D \op (-D)\big)$ is a bounded perturbation of the Lipschitz regular unbounded Kasparov module
\[
\left(X \op \wit{X}, \left(\begin{matrix} D & 1 \\ 1 & - D\end{matrix}\right)\right)
\]
from $\s A$ to $B$. Remark in this respect that the square of $\left(\begin{smallmatrix}D & 1 \\ 1 & - D\end{smallmatrix}\right)\colon \operatorname{Dom}(D) \op \operatorname{Dom}(D) \to X \op \wit{X}$ is given by
\[
\left(\begin{matrix} D & 1 \\ 1 & - D\end{matrix}\right)^2 = \left(\begin{matrix} 1 + D^2 & 0 \\ 0 & 1 + D^2 \end{matrix}\right)\colon \ \operatorname{Dom}\big(D^2\big) \op \operatorname{Dom}\big(D^2\big) \to X \op \wit{X} ,
\]
which is indeed an invertible operator.
\end{proof}

\section{Group structure}\label{s:group}
We show in this section that the commutative monoid $UK^{{\rm top}}(\sA,B)$ is in fact an abelian group. This result relies on a more general proposition stating (at least roughly speaking) that two unbounded Kasparov modules $(X,D)$ and $(X,D')$ from $\s A$ to $B$ are stably homotopic when the odd unbounded selfadjoint and regular operators $D$ and $D'$ have the same phase. This proposition will also be of key importance later on when we prove the injectivity of the Baaj--Julg bounded transform.

\begin{dfn}\label{d:inv}The \emph{inverse} of an unbounded Kasparov modules $(X,D)$ from $\s A$ to $B$ is the unbounded Kasparov module from $\s A$ to $B$ given by
\[
-(X,D) := \big( X^{{\rm op}},-D\big) ,
\]
where $X^{{\rm op}}$ agrees with $X$ as a Hilbert $C^*$-module over $B$, but $X^{{\rm op}}$ is equipped with the opposite $\zz/2\zz$-grading and with left action $\pi_{X^{{\rm op}}}\colon A \to \B L\big(X^{{\rm op}}\big)$ defined by
\[
\pi_{X^{{\rm op}}}(a) = \begin{cases} \pi_X(a) & \mbox{for} \ a \mbox{ even}, \\ -\pi_X(a) & \mbox{for} \ a \mbox{ odd}.\end{cases}
\]
\end{dfn}

\begin{prop}\label{p:phaequ}
Let $(X,D)$ and $(X,D')$ be two unbounded Kasparov modules from $\s A$ to $B$ and suppose there exists an odd selfadjoint unitary operator $F\colon X \to X$ such that
\begin{enumerate}\itemsep=0pt
\item[$1)$] the operator $F$ preserves the domain of $D$ and of $D'$ and the commutators $F D - D F\colon$ $\operatorname{Dom}(D) \to X$ and $F D' - D' F\colon \operatorname{Dom}(D') \to X$ have bounded extensions to~$X$;
\item[$2)$] the unbounded operators $D F$ and $D' F$ are bounded perturbations of even unbounded positive and regular operators $\De\colon \operatorname{Dom}(DF) \to X$ and $\De'\colon \operatorname{Dom}(D'F) \to X$;
\item[$3)$] for each $a \in \s A$, the image of the graded commutator $[F,a]$ is contained in $\operatorname{Dom}(D) \cap \operatorname{Dom}(D')$ and the operators $D [F,a], D' [F,a]\colon X \to X$ are bounded.
\end{enumerate}
Then $(X,D) - (X,D')$ is stably homotopic to the zero module $(0,0)$.
\end{prop}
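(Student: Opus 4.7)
The strategy is to first reduce by bounded perturbation (Proposition \ref{p:bddper}) to a normal form in which the radial parts $\De := DF$ and $\De' := D'F$ are themselves positive and commute with $F$, then to build a homotopy on $X \op X^{\mathrm{op}}$ from $(X,D) - (X,D')$ to a spectrally decomposable module constructed from the common phase. Hypothesis $(2)$ provides even bounded selfadjoint $T := DF - \De$ and $T' := D'F - \De'$, and the odd bounded selfadjoint perturbation $S := -\tfrac{1}{2}(TF + FT)$ of $D$ yields $(D+S)F$ as a bounded perturbation of $\De$ with a strictly smaller bounded part; iterating (using hypothesis $(1)$) and applying Proposition \ref{p:bddper} at each step lets us assume $DF = \De$ and $D'F = \De'$ exactly. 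A further symmetrisation based on $F\De F - \De = -F[F, D]$ being bounded reduces to the case $[\De, F] = [\De', F] = 0$.

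Set $\wit X := X \op X^{\mathrm{op}}$, $\wit F := F \op (-F)$, $\wit \De := \De \op \De'$ and $\wit D := \wit \De \wit F = D \op (-D')$, so that $(\wit X, \wit D)$ represents $(X,D) - (X,D')$, $\wit F$ is an odd bounded selfadjoint unitary, $\wit \De$ is an even positive selfadjoint regular operator, and $[\wit \De, \wit F] = 0$. On $C([0,1], \wit X)$ over $C([0,1], B)$ define $E$ fibrewise by $E_t := (\wit \De + t) \wit F$. Each fibre is odd selfadjoint regular as the product of the commuting positive selfadjoint regular operator $\wit \De + t$ with the bounded selfadjoint unitary $\wit F$, and all fibres share the common domain $\operatorname{Dom}(\wit \De)$. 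The Kasparov axioms for $E$ hold uniformly in $t$, using compactness of $a(i+D)^{-1}$ and $a(i+D')^{-1}$ together with hypothesis $(3)$, which controls the mixed terms $\wit D[\wit F, a]$ appearing in $[E_t, a]$. At $t = 0$ the module is $(\wit X, \wit D)$; for any $t > 0$, the orthogonal projection $P := \tfrac{1}{2}(1 + \wit F)$ commutes with $E_t$ and satisfies $E_t P = (\wit \De + t) P \geq 0$ and $E_t (P - 1) = (\wit \De + t)(1 - P) \geq 0$, verifying conditions $(1)$--$(2)$ of Definition \ref{d:spedec}.

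The remaining compatibility conditions of Definition \ref{d:spedec} -- that $P$ respects the $A$-action and grading in the prescribed way -- are obstructed only by the bounded commutator $[\wit F, a]$. This obstruction is absorbed by adding at both endpoints of the homotopy an auxiliary spectrally decomposable module $(Y_0, D_0')$ whose construction uses hypothesis $(3)$ to produce a compensating projection, yielding a spectrally decomposable $(Y_1, D_1')$ together with a homotopy $(\wit X \op Y_0, \wit D \op D_0') \sim_h (Y_1, D_1')$; this is the required stable homotopy $(X, D) - (X, D') \sim_{sh} (0,0)$. The principal obstacle is the joint analytic verification of $E$ as a genuine unbounded Kasparov module on $C([0,1], \wit X)$ with uniform commutator and resolvent bounds along the path, together with the explicit construction of the spectrally decomposable correction absorbing the non-commutativity $[\wit F, a]$; hypothesis $(3)$ is essential for both.
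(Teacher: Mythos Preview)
Your proposal contains a genuine gap at exactly the point you yourself flag as ``the principal obstacle.'' You correctly observe that the projection $P=\tfrac12(1+\wit F)$ satisfies conditions~$(1)$, $(2)$ and $(4)$ of Definition~\ref{d:spedec}, but fails condition~$(3)$ because $[F,a]\neq 0$ for $a\in\s A$. Your remedy~-- ``absorb'' the obstruction by adding an auxiliary spectrally decomposable module $(Y_0,D_0')$ to both endpoints~-- does not work and is not actually carried out. Adding a direct summand $(Y_0,D_0')$ with its own projection $Q$ produces on $\wit X\op Y_0$ the projection $P\op Q$, and on the first summand one still has $[P,a]=[\wit F,a]/2\neq 0$; no choice of $(Y_0,D_0')$ repairs this. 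The stable-homotopy relation allows stabilisation by spectrally decomposable modules, not by ``almost'' spectrally decomposable ones, so your homotopy lands in a module that is \emph{not} in the required class, and the argument stops.

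The paper resolves this in a genuinely different way. Rather than varying the operator, it keeps the unbounded operator $(D_+-D_-')\op(D_--D_+')$ \emph{constant} along the homotopy and instead varies the \emph{left action} of $A$ on $X\op X$ by conjugating with a path of rotation unitaries $U_t$ built from $P_\pm=\tfrac12(1\pm F)$. At $t=1$ the twisted action recovers $X\op X^{{\rm op}}$ up to unitary equivalence and bounded perturbation; at $t=0$ the action is $\pi_X(a)\op\pi_X(a)$ for even $a$ and antidiagonal for odd $a$. With this arrangement the coordinate projection $\left(\begin{smallmatrix}1&0\\0&0\end{smallmatrix}\right)$~-- not the spectral projection of $\wit F$~-- satisfies \emph{all four} conditions of Definition~\ref{d:spedec} exactly, with no residual commutator to absorb. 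Hypothesis~$(3)$ enters because $\pi_t(a)-\pi_0(a)$ factors through $[F,a]$, which is what makes the twisted family an unbounded Kasparov module over $C([0,1],B)$.

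A secondary concern: your opening reduction (iterating bounded perturbations to force $DF=\De$ exactly and then $[\De,F]=0$) is not justified. One step of your perturbation $S=-\tfrac12(TF+FT)$ replaces $T$ by $\tfrac12(T-FTF)$, which has no reason to be smaller in norm, so the iteration need not converge. The paper never needs such a reduction; condition~$(2)$ is used only once, at the very end, to see that the fibre at $t=0$ is a bounded perturbation of a spectrally decomposable module.
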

\begin{proof}
We are going to show that
\[
(X,D) - (X, D') = \big(X \op X^{{\rm op}}, D \op (- D') \big)
\]
is homotopic to a spectrally decomposable unbounded Kasparov module. We denote the grading operator on $X$ by $\ga\colon X \to X$ so that the grading operator on $X^{{\rm op}}$ is given by $-\ga\colon X \to X$.

We remark that the graded commutator $[F,a]\colon X \to X$ is compact for all $a \in A$. Indeed, for any $a \in \s A$ and $a' \in A$, it follows from assumption $(3)$ and the fact that $(X,D)$ is an unbounded Kasparov module that
\[
a' [F,a] = a' (i + D)^{-1} (i + D) [F,a]\colon \ X \to X
\]
is a compact operator.

Define the orthogonal projections
\[
P_+ := \frac{1 + F}{2} , \ P_- := \frac{1 - F}{2} = 1 - P_+\colon \ X \to X
\]
and for each $t \in [0,1]$, define the unitary automorphisms of the Hilbert $C^*$-module $X \op X$:
\begin{gather}
U_t := \left(\begin{matrix} \cos(t \pi/ 2) \cd P_- + P_+ & \sin(t \pi/2) \cd P_- \\ - \sin(t \pi/2) \cd P_- & \cos(t \pi/2) \cd P_- + P_+\end{matrix}\right) \qquad \text{and} \nonumber\\
V_t := \left(\begin{matrix} \cos(t \pi/ 2) & \sin(t \pi/2) \\ - \sin(t \pi/2) & \cos(t \pi/2) \end{matrix}\right) .\label{eq:unitary}
\end{gather}

We study the $\zz/2\zz$-graded $C^*$-correspondence $Y$ from $A$ to $C([0,1],B)$, which as a Hilbert $C^*$-module over $C([0,1],B)$ agrees with $C( [0,1],X \op X)$, but with left action $\pi\colon A \to \B L(Y)$ defined by putting
\[
\pi_t(a) := \begin{cases} U_t \left(\begin{matrix}\pi_X(a) & 0 \\ 0 & \pi_X(a)\end{matrix}\right) U_t^* & \text{for} \ a \text{ even}, \vspace{1mm}\\
U_t \left(\begin{matrix}0 & -\pi_X(a) \\ -\pi_X(a) & 0\end{matrix}\right) V_t U_t^*
& \text{for} \ a \text{ odd} ,\end{cases}
\]
for all $t \in [0,1]$, and with grading operator $\si\colon Y \to Y$ defined by
\[
\si_t := \left(\begin{matrix} 0 & - \ga \\ - \ga & 0\end{matrix}\right) \colon \ X \op X \to X \op X \qquad \text{for all } t \in [0,1] .
\]
It is useful to notice that
\[
\si_t = U_t \left(\begin{matrix} 0 & - \ga \\ -\ga & 0\end{matrix}\right) V_t U_t^* \qquad \text{for all } t \in [0,1] .
\]

Using assumption $(1)$, we define the unbounded selfadjoint and regular operators
\begin{gather*}
D_+ := P_+ D P_+\colon \ \operatorname{Dom}(D P_+) \to X , \qquad D_- := P_- D P_-\colon \ \operatorname{Dom}(D P_-) \to X \qquad \text{and} \\
D'_+ := P_+ D' P_+\colon \ \operatorname{Dom}(D' P_+) \to X , \qquad D'_- := P_- D' P_-\colon \ \operatorname{Dom}(D P_-) \to X .
\end{gather*}
One may then verify directly that
\begin{gather*}
 D_+ - D_-'\colon \ \operatorname{Dom}(D P_+) \cap \operatorname{Dom}(D' P_-) \to X \qquad \text{and} \\
 D_- - D_+'\colon \ \operatorname{Dom}(D P_-) \cap \operatorname{Dom}(D' P_+) \to X
\end{gather*}
are unbounded selfadjoint and regular operators. The resolvent of $D_+ - D_-'$ is for example given by
\[
(i \la + D_+ - D_-')^{-1} = P_+ ( i \la + D_+)^{-1} + P_- (i\la - D_-')^{-1}
\]
for all $\la \in \B R \sem \{0\}$. Alternatively, we refer to \cite[Section~7]{KaLe:LGR} or \cite[Theorem~4.5]{LeMe:SSH} for much more general results on sums of unbounded selfadjoint and regular operators.

We claim that the pair $\left(Y, \left(\begin{smallmatrix} D_+ - D'_- & 0 \\ 0 & D_- - D_+' \end{smallmatrix}\right)\right)$ is an unbounded Kasparov module from $\s A$ to $C( [0,1], B)$ (where it is understood that the unbounded selfadjoint and regular operator in question acts as $( D_+ - D_-') \op (D_- - D_+')$ in each fibre). For each $t \in [0,1]$ we thus have the fibre $(Y_t, ( D_+ - D_-') \op (D_- - D_+'))$ where $Y_t$ is the countably generated $\zz/2\zz$-graded $C^*$-correspondence from $A$ to $B$ which agrees with $X \op X$ as a Hilbert $C^*$-module but with grading operator $\si_t = \left(\begin{smallmatrix}0 & -\ga \\ -\ga & 0\end{smallmatrix}\right)$ and with left action given by the even $*$-homomorphism $\pi_t\colon A \to \B L(X \op X)$.

We let $t \in [0,1]$ be given and compute for each even $a \in A$ that
\begin{gather*}
\pi_t(a) - \pi_0(a)
 = \frac{1}{2} \left(\begin{matrix} [F,a] & 0 \\ 0 & [F,a]\end{matrix}\right) \left(\begin{matrix} 1 - \cos(t \pi/2) & - \sin(t \pi /2) \\ \sin(t \pi/2) & 1 - \cos( t \pi/2)\end{matrix}\right) U_t^* \nonumber\\
\hphantom{\pi_t(a) - \pi_0(a)=}{} = -\frac{1}{2} U_t \left(\begin{matrix} 1 - \cos(t \pi/2) & \sin(t \pi /2) \\ -\sin(t \pi/2) & 1 - \cos( t \pi/2)\end{matrix}\right) \left(\begin{matrix} [F,a] & 0 \\ 0 & [F,a]\end{matrix}\right)
\end{gather*}
and for each odd $a \in A$ that
\begin{gather*}
\pi_t(a) - \pi_0(a)
 = - \frac{1}{2} \left(\begin{matrix} [F,a] & 0 \\ 0 & [F,a]\end{matrix}\right)
\left(\begin{matrix} -\sin(t \pi / 2) & 1 - \cos(t \pi/2) \\ 1 - \cos( t \pi/2) & \sin(t \pi/2) \end{matrix}\right) V_t U_t^* \\
\hphantom{\pi_t(a) - \pi_0(a)=}{} = -\frac{1}{2} U_t V_t^* \left(\begin{matrix} -\sin(t \pi / 2) & 1 - \cos(t \pi/2) \\ 1 - \cos( t \pi/2) & \sin(t \pi/2) \end{matrix}\right) \left(\begin{matrix} [F,a] & 0 \\ 0 & [F,a]\end{matrix}\right) .
\end{gather*}
Since the graded commutator $[F,a]\colon X \to X$ is compact, this computation implies that $\pi_t(a) - \pi_0(a)\colon X \op X \to X \op X$ is a compact operator for all $a \in A$ and $t \in [0,1]$ and moreover that the associated map $[0,1] \to \B K(X \op X)$ is continuous in operator norm. Using assumption $(3)$, the above computation also implies that $\pi_t(a) - \pi_0(a)$ preserves the domain of $(D_+ - D_-') \op (D_- - D_+')$ for all $a \in \s A$ and $t \in [0,1]$ (the image of $\pi_t(a) - \pi_0(a)$ is in fact contained in this domain) and moreover that the graded commutator
\begin{gather*}
 [ (D_+ - D_-') \op (D_- - D_+') , \pi_t(a) - \pi_0(a) ]
 \colon \\
 \qquad{} \operatorname{Dom}(DP_+ \cap D'P_-) \op \operatorname{Dom}(D P_- \op D'P_+) \to X \op X
\end{gather*}
extends to a bounded operator on $X \op X$ (in fact each of the terms have this property). The associated map $[0,1] \to \B L(X \op X)$ given by
\[
t \mapsto \ov{ [ (D_+ - D_-') \op (D_- - D_+') , \pi_t(a) - \pi_0(a) ] }
\]
is continuous in operator norm. These observations imply that $(Y, (D_+ - D_-') \op (D_- - D_+'))$ is an unbounded Kasparov module from $\s A$ to $C( [0,1],B)$ if and only if the fibre at $t = 1$ is an unbounded Kasparov module from $\s A$ to $B$.

The fibre at $t = 1$ is given by the pair $( Y_1, (D_+ - D_-') \op (D_- - D_+') )$, where $Y_1$ is unitarily isomorphic to $X \op X^{{\rm op}}$ as a $\zz/2\zz$-graded $C^*$-correspondence via the unitary operator $U_1\colon X \op X \to X \op X$ defined in equation~ \eqref{eq:unitary}. Moreover, we have that
\begin{gather*}
 U_1 ( D \op (- D') ) U_1^*
= \left(\begin{matrix} D_+ - D'_- & - P_+ D P_- - P_- D' P_+ \\ - P_- D P_+ - P_+ D' P_- & D_- - D'_+\end{matrix}\right)\colon \nonumber\\
\qquad{} \big( \operatorname{Dom}( DP_+) \cap \operatorname{Dom}( D' P_-) \big) \op \big( \operatorname{Dom}( D'P_+) \cap \operatorname{Dom}( D P_-) \big)
\to X \op X .
\end{gather*}
By assumption $(1)$ we know that the off-diagonal entries extend to bounded operators on $X$ and it follows that the fibre at $t = 1$ agrees with the unbounded Kasparov module $(X \op X^{{\rm op}}, D \op (- D') )$ from $\s A$ to $B$ up to unitary equivalence and bounded perturbations. By Proposition \ref{p:bddper} this implies in particular that $(Y_1, (D_+ - D_-') \op (D_- - D_+'))$ is an unbounded Kasparov module from~$\s A$ to~$B$ which is homotopic to $(X \op X^{{\rm op}}, D \op (- D')) = (X,D) - (X, D')$.

We may thus conclude that $(Y,(D_+ - D_-') \op (D_- - D_+'))$ is an unbounded Kasparov module from $\s A$ to $C( [0,1],B)$.

By what has been achieved so far, we have reduced the proof of the proposition to showing that the fibre of $(Y, (D_+ - D_-') \op (D_- - D_+'))$ at $t = 0$ is homotopic to a spectrally decomposable unbounded Kasparov module from $\s A$ to $B$.

The unbounded Kasparov module sitting as the fibre at $t = 0$ is given by the pair $( Y_0, (D_+ - D_-') \op (D_- - D_+') )$, where $Y_0$ agrees with $X \op X$ as a Hilbert $C^*$-module over $B$ but with grading operator $\si_0 = \left(\begin{smallmatrix} 0 & - \ga \\ - \ga & 0\end{smallmatrix}\right)$ and left action $\pi_0\colon A \to \B L(Y_0)$ given by
\[
\pi_0(a) = \begin{cases} \left(\begin{matrix} \pi_X(a) & 0 \\ 0 & \pi_X(a)\end{matrix}\right) & \text{for} \ a \text{ even}, \vspace{1mm}\\ \left(\begin{matrix} 0 & -\pi_X(a) \\ -\pi_X(a) & 0\end{matrix}\right) & \text{for} \ a \text{ odd} .\end{cases}
\]
By assumption $(2)$ we know that $(D_+ - D_-') \op (D_- - D_+')$ is a bounded perturbation of the unbounded selfadjoint and regular operator
\begin{gather*}
 ( P_+ \De P_+ + P_- \De' P_- ) \op ( - P_- \De P_- - P_+ \De' P_+)\colon \\
\qquad {}\big( \operatorname{Dom}( DP_+) \cap \operatorname{Dom}( D' P_-) \big) \op \big( \operatorname{Dom}( D'P_+) \cap \operatorname{Dom}( D P_-) \big) \to X \op X ,
\end{gather*}
where the upper diagonal entry and minus the lower diagonal entry are both unbounded positive and regular operators. But this shows that the fibre at $t = 0$ is a bounded perturbation of a spectrally decomposable unbounded Kasparov module. Indeed, the unbounded Kasparov module $( Y_0, ( P_+ \De P_+ + P_- \De' P_- ) \op ( - P_- \De P_- - P_+ \De' P_+) )$ from $\s A$ to $B$ is spectrally decomposable (using the orthogonal projection $\left(\begin{smallmatrix} 1 & 0 \\ 0 & 0\end{smallmatrix}\right)\colon X \op X \to X \op X$ when verifying the conditions in Definition \ref{d:spedec}).
\end{proof}

\begin{rem}It is worthwhile to understand the result of Proposition~\ref{p:phaequ} in the light of Skandalis' work in \cite{Ska:SRK}. This relationship was communicated to us by the referee. Indeed, under the assumptions of Proposition~\ref{p:phaequ} we obtain that $(X,F)$ is a Kasparov module from~$A$ to~$B$ (see the proof of the proposition). With some effort it can moreover be established that the assumptions imply that $a \big( F D\big(1 + D^2\big)^{-1/2} + D\big(1 + D^2\big)^{-1/2} F\big)a^* \geq 0$ modulo the compact operators on $X$ for all $a \in A$ and a similar result applies to $D'\big(1 + (D')^2\big)^{-1/2}$. Therefore, by \cite[Lemma~11]{Ska:SRK} we obtain that the bounded transforms $\big(X,D\big(1+D^2\big)^{-1/2}\big)$ and $\big(X,D'\big(1 + (D')^2\big)^{-1/2}\big)$ are operator homotopic since they are both operator homotopic to~$(X,F)$.

Notice however that this argument does not yield an alternative proof of Proposition~\ref{p:phaequ} since it only provides information on the bounded transforms of the unbounded Kasparov modules $(X,D)$ and $(X,D')$.
\end{rem}

\begin{Theorem}\label{t:group}The direct sum of unbounded Kasparov modules and the zero module provide the topological unbounded $KK$-theory, $UK^{{\rm top}}(\s A,B)$, with the structure of an abelian group.
\end{Theorem}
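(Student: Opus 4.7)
The plan is to reduce Theorem~\ref{t:group} to the existence of inverses, since commutativity and associativity of the direct sum together with the neutrality of the zero module are immediate from Definition~\ref{d:dirsum} and the fact that unitary equivalence implies homotopy. The candidate inverse of the class of an arbitrary unbounded Kasparov module $(X,D)$ is the class of $-(X,D) = \big(X^{{\rm op}},-D\big)$ from Definition~\ref{d:inv}, so the point is to show that $(X,D) + \big(X^{{\rm op}},-D\big) \sim_{sh} (0,0)$.

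I would obtain this relation by applying Proposition~\ref{p:phaequ} in the special case $D' = D$, which reduces the task to constructing an odd selfadjoint unitary $F\colon X \to X$ satisfying the three conditions of that proposition. To access such an $F$ I first replace $(X,D)$, up to homotopy, by a Lipschitz regular representative with $D$ invertible; this is legitimate by Propositions~\ref{p:liphom} and~\ref{p:lipinv} and does not alter the class in $UK^{{\rm top}}(\s A,B)$. In this invertible, Lipschitz regular setting I set $F := D|D|^{-1}$, the phase of~$D$, defined via the continuous functional calculus on the spectrum of~$D$ (which is bounded away from zero).

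Conditions~(1) and~(2) of Proposition~\ref{p:phaequ} should then be essentially automatic: as a continuous function of $D$, the operator $F$ preserves $\operatorname{Dom}(D)$ and commutes with $D$ there, while $DF = |D|$ is an even unbounded positive regular operator realising the required $\De$ with vanishing bounded perturbation. The hard part is condition~(3), which demands for each $a \in \s A$ that the image of the graded commutator $[F,a]$ lie in $\operatorname{Dom}(D)$ and that $D[F,a]$ extend to a bounded operator on $X$.

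My plan for condition~(3) is to establish, for homogeneous $a \in \s A$, the Leibniz identity
\[
D[F,a] = [DF,a] - (-1)^{\deg(a)} [D,a] F = [|D|,a] - (-1)^{\deg(a)} d(a) F
\]
on $\operatorname{Dom}(D)$, where graded commutators are used throughout. The right-hand side extends to a bounded adjointable operator on $X$: the term $[|D|,a]$ by the assumed Lipschitz regularity, and $d(a)F$ because both factors are bounded. Density of $\operatorname{Dom}(D)$ in $X$, boundedness of $[F,a]$, and closedness of $D$ then propagate these conclusions from $\operatorname{Dom}(D)$ to all of $X$, yielding condition~(3). Proposition~\ref{p:phaequ} now delivers $(X,D) - (X,D) \sim_{sh} (0,0)$, which concludes the argument.
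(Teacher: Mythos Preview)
Your proposal is correct and follows essentially the same route as the paper: reduce via Proposition~\ref{p:lipinv} to the Lipschitz regular invertible case, take $F = D|D|^{-1}$, and apply Proposition~\ref{p:phaequ} with $D' = D$. The only cosmetic difference is in verifying condition~(3): the paper factors $[F,a] = |D|^{-1}T(a)$ with $T(a)$ bounded (so the image manifestly lies in $\operatorname{Dom}(D)$), whereas you compute $D[F,a]$ on $\operatorname{Dom}(D)$ via the Leibniz identity and then invoke closedness of $D$ to extend---both arguments hinge on Lipschitz regularity in the same way and are equivalent.
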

\begin{proof}For an unbounded Kasparov module $(X,D)$ from $\s A$ to $B$, we need to prove that $(X,D) - (X,D) = (X \op X^{{\rm op}}, D \op (-D))$ is stably homotopic to the zero module $(0,0)$. By Proposition \ref{p:lipinv} we may assume that $(X,D)$ is Lipschitz regular and that $D \colon \operatorname{Dom}(D) \to X$ is invertible. The phase of $D$ is then a well-defined odd selfadjoint unitary operator $F := D |D|^{-1}\colon X \to X$. The result of the present proposition will now be a consequence of Proposition~\ref{p:phaequ} applied to the case where $(X,D) = (X,D')$: The conditions $(1)$ and $(2)$ are clearly satisfied and condition $(3)$ follows from the Lipschitz regularity of $(X,D)$. Indeed, for each homogeneous $a \in \s A$ and each $\xi \in \operatorname{Dom}(D)$ it holds that
\[
[F,a](\xi) = |D|^{-1} [D,a](\xi) + (-1)^{\deg (a) + 1} |D|^{-1} [ |D|,a ] F(\xi)
\]
so that $[F,a] = |D|^{-1} T(a)$ for some bounded adjointable operator $T(a)\colon X \to X$.
\end{proof}

\section{Injectivity of the bounded transform}\label{s:injbou}
We are now ready to prove the main theorem of this paper:

\begin{Theorem}\label{t:isobaajul}
Suppose that $A$ and $B$ are a $\zz/2\zz$-graded $C^*$-algebras with~$A$ separable and~$B$ $\si$-unital. For any norm-dense countably generated $\zz/2\zz$-graded $*$-subalgebra $\s A \su A$ we have an isomorphism of abelian groups
\[
\C F\colon \ UK^{{\rm top}}(\s A, B) \to KK(A,B)
\]
induced by the Baaj--Julg bounded transform $(X,D) \mapsto \big[X, D\big(1 + D^2\big)^{-1/2}\big]$.
\end{Theorem}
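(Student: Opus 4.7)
Since surjectivity of $\C F$ is already provided by Theorem~\ref{t:baajul}, the plan is to prove injectivity: given an unbounded Kasparov module $(X,D)$ from $\sA$ to $B$ with $\C F[X,D] = 0$ in $KK(A,B)$, I aim to show that $(X,D) \sim_{sh} (0,0)$. First, using Propositions~\ref{p:liphom} and~\ref{p:lipinv}, I reduce to the case where $(X,D)$ is Lipschitz regular and $D \colon \operatorname{Dom}(D) \to X$ is invertible, so that the phase $F := D|D|^{-1}\colon X \to X$ is a well-defined odd selfadjoint unitary and $(X,F)$ is a bounded Kasparov module representing the class $\C F[X,D] = 0$. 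The vanishing hypothesis then produces a bounded Kasparov homotopy $(\wit X, \wit F)$ from $A$ to $C([0,1],B)$ between $(X,F)$ and $(0,0)$, which by a standard modification (absorbing a bounded degenerate Kasparov module, which lifts at the unbounded level to a spectrally decomposable module via the projection $(1 + F_d)/2$ and is thus invisible modulo $\sim_{sh}$) can be arranged to satisfy $\wit F = \wit F^*$ and $\wit F^2 = 1$.

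The next step is to apply the Baaj--Julg lifting result Theorem~\ref{t:bousur} to $(\wit X, \wit F)$ with the countably generated dense $*$-subalgebra $\sA \su A$ and the $\si$-unital $C^*$-algebra $C([0,1],B)$. This produces an even strictly positive compact operator $\wit \De$ on $\wit X$ such that $(\wit X, \wit \De^{-1} \wit F)$ is an unbounded Kasparov module from $\sA$ to $C([0,1],B)$. Evaluating at the fibres delivers an unbounded homotopy $(X, \De_0^{-1} F) \sim_h (0,0)$, where $\De_0 \colon X \to X$ is the value of $\wit \De$ at $t = 0$.

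The critical step is to establish $(X,D) \sim_{sh} (X, \De_0^{-1} F)$. Since both operators have the same phase $F$, I will invoke Proposition~\ref{p:phaequ} with $D' := \De_0^{-1} F$, and verify its three hypotheses. Theorem~\ref{t:bousur}(1) gives $\big[F, \wit\De^{-1}\big] = 0$, while by functional calculus $F$ commutes with $|D|$; hence $FD = DF = |D|$ and $FD' = D'F = \De_0^{-1}$, which establishes conditions (1) and (2). For condition (3), on the $D$-side Lipschitz regularity together with the identity
\[
[F,a] = |D|^{-1}[D,a] + (-1)^{\deg(a)+1} |D|^{-1}[|D|,a] F
\]
shows that $[F,a]$ has image in $\operatorname{Dom}(D)$ with $D[F,a]$ bounded; on the $D'$-side, Theorem~\ref{t:bousur}(3) yields that $\De_0^{-1}[F,a]$ is compact, and since $F$ preserves $\operatorname{Dom}\big(\De_0^{-1}\big)$ we conclude that $D'[F,a] = F\De_0^{-1}[F,a]$ is bounded with image in $\operatorname{Dom}\big(\De_0^{-1}\big)$. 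Proposition~\ref{p:phaequ} then gives $(X,D) - (X, \De_0^{-1} F) \sim_{sh} (0,0)$; combining this with the unbounded homotopy $(X, \De_0^{-1} F) \sim_h (0,0)$ and the group structure from Theorem~\ref{t:group} yields $(X,D) \sim_{sh} (0,0)$.

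The main obstacle will be the endpoint bookkeeping for the Baaj--Julg lift: one must carefully identify the $t = 0$ fibre of $\big(\wit X, \wit\De^{-1}\wit F\big)$ with $\big(X, \De_0^{-1} F\big)$ after the self-adjoint/involutive adjustment of $\wit F$, and verify that any degenerate piece absorbed in that adjustment becomes a spectrally decomposable summand at the unbounded level so as to remain invisible modulo $\sim_{sh}$. Once this identification is in hand, the phase-matching step via Proposition~\ref{p:phaequ} and the group structure from Theorem~\ref{t:group} close the argument.
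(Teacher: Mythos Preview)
Your overall strategy matches the paper's: reduce to Lipschitz regular data, lift a bounded null-homotopy via Baaj--Julg, and invoke Proposition~\ref{p:phaequ} to bridge the original unbounded module and the endpoint of the lift. The step you flag as the ``main obstacle'' is, however, a genuine gap that your plan does not close.

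The normalization of $(\wit X,\wit F)$ to achieve $\wit F^* = \wit F$ and $\wit F^2 = 1$ is not a mere absorption of a separate degenerate summand: the standard procedure (cf.\ \cite[Section~17.6]{Bla:KOA}) replaces $\wit X$ by the doubled module $\wit X\op\wit X^\sim$ (opposite grading, trivial left action on the second factor) and $\wit F$ by the $2\times 2$ matrix $\wit G$ with off-diagonal entries $\big(1-\wit F^2\big)^{1/2}$. The Baaj--Julg operator $\wit\De$ from Theorem~\ref{t:bousur} is then constructed on this doubled module, and there is no reason for $\wit\De$ or its fibre $\De_0$ to be block-diagonal. Consequently the $t=0$ fibre of the lift is $\big(X\op X^\sim,\De_0^{-1}G_0\big)$ with $\De_0$ acting on $X\op X^\sim$; your assertion ``$\De_0\colon X\to X$ is the value of $\wit\De$ at $t=0$'' is therefore not correct, and the comparison you set up between $(X,D)$ and $(X,\De_0^{-1}F)$ does not make sense as written. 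Your proposed remedy---that the added degenerate piece lifts separately to a spectrally decomposable summand---does not help, because the lift $\De_0^{-1}G_0$ simply does not split as a direct sum.

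The paper resolves this by doubling on the unbounded side as well. Working with the bounded transform $D(1+D^2)^{-1/2}$ rather than the phase $D|D|^{-1}$, it forms $E=\left(\begin{smallmatrix}D & 1\\ 1 & -D\end{smallmatrix}\right)$ on $X\op X^\sim$ and observes that $E|E|^{-1}=G_0$ \emph{exactly}: the off-diagonal entries $(1+D^2)^{-1/2}$ of $G_0$ match the off-diagonal $1$'s of $E$ after dividing by $|E|=\operatorname{diag}\big((1+D^2)^{1/2},(1+D^2)^{1/2}\big)$. Proposition~\ref{p:phaequ} then applies on the doubled module with the selfadjoint unitary $G_0$, giving $\big[X\op X^\sim,\De_0^{-1}G_0\big]=\big[X\op X^\sim,E\big]$ in $UK^{\rm top}(\s A,B)$, and Proposition~\ref{p:lipinv} identifies the latter with $[X,D]$. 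The essential correction to your plan is that the phase-matching comparison must take place on the doubled module, where both unbounded operators actually live.
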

\begin{proof}
By Theorems \ref{t:group} and~\ref{t:baajul} we only need to show that $\C F\colon UK^{{\rm top}}(\s A,B) \to KK(A,B)$ is injective.

Suppose that two unbounded Kasparov modules $(X_0,D_0)$ and $(X_1,D_1)$ from $\s A$ to $B$ satisfy that their bounded transforms $\big(X_0, D_0\big(1 + D_0^2\big)^{-1/2}\big)$ and $\big(X_1,D_1\big(1 + D_1^2\big)^{-1/2}\big)$ are homotopic. We thus have a Kasparov module $(X,F)$ from $A$ to $C([0,1],B)$ and two even unitary isomorphisms of $C^*$-correspondences $U_0\colon X \hot_{{\rm ev}_0} B \to X_0$ and $U_1\colon X \hot_{{\rm ev}_1} B \to X_1$ implementing unitary equivalences
\begin{gather*}
 \big(X_0, D_0 \big(1 + D_0^2\big)^{-1/2}\big) \sim_u \big(X \hot_{{\rm ev}_0} B, F \hot_{{\rm ev}_0} 1\big) \qquad \text{and} \\
 \big(X_1, D_1 \big(1 + D_1^2\big)^{-1/2}\big) \sim_u \big(X \hot_{{\rm ev}_1} B, F \hot_{{\rm ev}_1} 1\big) .
\end{gather*}
By Proposition \ref{p:liphom}, we may assume without loss of generality that $(X_0,D_0)$ and $(X_1,D_1)$ are both Lipschitz regular and, using \cite[Proposition~17.4.3]{Bla:KOA}, we may moreover assume that \mbox{$F\colon X \to X$} is a selfadjoint contraction. We let $X \op \wit X$ denote the $\zz/2\zz$-graded $C^*$-cor\-respon\-dence from $A$ to $C( [0,1],B)$ which agrees with $X \op X$ as a Hilbert $C^*$-module over $C([0,1],B)$ but with grading operator $\ga \op (-\ga)\colon X \op X \to X \op X$ and with left action given by the even $*$-homomorphism $\pi_{X \op \wit X} := \pi_X \op 0\colon A \to \B L\big(X \op \wit X\big)$. Define the odd bounded adjointable operator
\[
G := \left(\begin{matrix} F & \big(1 - F^2\big)^{1/2} \\ \big(1 - F^2\big)^{1/2} & - F\end{matrix}\right) \colon \ X \op \wit X \to X \op \wit X
\]
and notice that $G^2 = 1$ and $G = G^*$. Moreover, it holds that $\big(X \op \wit X, G\big)$ is a Kasparov module from $A$ to $C( [0,1],B)$ which is homotopic to $(X,F)$, see \cite[Section~17.6]{Bla:KOA}. A similar construction applies to the endpoints yielding Kasparov modules $\big(X_0 \op \wit{X_0}, G_0\big)$ and $\big(X_1 \op \wit{X_1}, G_1\big)$ from~$A$ to~$B$. For each $i \in \{0,1\}$, we define the invertible unbounded selfadjoint and regular operator
\[
E_i := \left(\begin{matrix} D_i & 1 \\ 1 & - D_i\end{matrix}\right) \colon \ \operatorname{Dom}(D_i) \op \operatorname{Dom}(D_i) \to X_i \op \wit X_i
\]
and remark that $G_i = E_i |E_i|^{-1}$. We recall from Proposition~\ref{p:lipinv} that $\big(X_i \op \wit{X_i}, E_i\big)$ is a~Lipschitz regular unbounded Kasparov module from $\s A$ to $B$ and that{\samepage
\[
\big[ X_i \op \wit{X_i}, E_i\big] = [X_i , D_i]
\]
in the topological unbounded $KK$-theory, $UK^{{\rm top}}(\s A,B)$.}

Using the Kasparov module $\big(X \op \wit X, G\big)$ from $A$ to $C([0,1],B)$ as input for Theorem~\ref{t:bousur}, we may choose an even strictly positive compact operator $\De\colon X \op \wit X \to X \op \wit X$ such that $(1)$, $(2)$, and $(3)$ in Theorem \ref{t:bousur} hold. In particular, we obtain an unbounded Kasparov module $\big( X \op \wit X, \De^{-1}G\big)$ from $\s A$ to $C([0,1],B)$ which implements a homotopy between the unbounded Kasparov modules
\[
\big(X_0 \op \wit{X_0}, \De_0^{-1} G_0\big) \qquad \text{and} \qquad \big(X_1 \op \wit{X_1}, \De_1^{-1} G_1\big) ,
\]
where by definition
\[
\De_i := (U_i \op U_i) \big(\De \hot_{{\rm ev}_i} 1 \big) (U_i^* \op U_i^*) \in \B K\big( X_i \op \wit X_i\big) , \qquad \text{for } i \in \{0,1\} .
\]

Let us fix an $i \in \{0,1\}$. Summarizing what has been obtained so far, we see that the proof of the theorem is finished provided that the identity
\[
\big[X_i \op \wit{X_i}, \De_i^{-1} G_i\big] = \big[ X_i \op \wit{X_i}, E_i\big]
\]
holds in $UK^{{\rm top}}(\s A,B)$. We are going to apply Proposition~\ref{p:phaequ} for our two unbounded Kasparov modules from $\s A$ to $B$ and the odd selfadjoint unitary operator $G_i\colon X_i \op \wit X_i \to X_i \op \wit X_i$. So we need to verify the three conditions in the statement of Proposition~\ref{p:phaequ}. For condition~$(1)$ we have that $G_i$ preserves the domains of $\De_i^{-1} G_i$ and $E_i$ and that both the non-graded commutators $\big[G_i, \De_i^{-1} G_i\big]\colon \operatorname{Dom}\big(\De_i^{-1} G_i\big) \to X_i \op \wit{X_i}$ and $[G_i, E_i]\colon \operatorname{Dom}(E_i) \to X_i \op \wit{X_i}$ are in fact trivial. For condition~$(2)$ we notice that
\begin{gather*}
 \De_i^{-1} G_i^2 = \De_i^{-1}\colon \ \operatorname{Dom}\big(\De_i^{-1}\big) \to X_i \op \wit{X_i} \qquad \text{and} \\
 E_i G_i = |E_i|\colon \ \operatorname{Dom}(|E_i|) \to X_i \op \wit{X_i}
\end{gather*}
are already even unbounded positive and regular operators (so no bounded perturbations are needed). To check the final condition $(3)$, we let $a \in \s A$ be homogeneous. The graded commutator $[G_i,a]$ has image contained in $\operatorname{Dom}\big(\De_i^{-1}\big) = \operatorname{Dom}\big(\De_i^{-1} G_i\big)$ and
\[
\De_i^{-1} G_i [G_i,a] = G_i \De_i^{-1} [G_i,a]\colon\ X_i \op \wit{X_i} \to X_i \op \wit{X_i}
\]
is bounded since $[G,a]\colon X \op \wit X \to X \op \wit X$ has image contained in $\operatorname{Dom}\big(\De^{-1}\big)$ and $\De^{-1} [G,a]$: $X \op \wit X \to X \op \wit X$ is bounded by construction, see Theorem~\ref{t:bousur}. The graded commutator $[G_i,a]$ also has image contained in $\operatorname{Dom}(E_i) = \operatorname{Dom}(|E_i|)$ since
\[
[G_i,a](\xi) = |E_i|^{-1} [E_i,a](\xi) + (-1)^{\deg (a) + 1} |E_i|^{-1}[|E_i|, a] |E_i|^{-1} E_i(\xi)
\]
for all $\xi \in \operatorname{Dom}(E_i)$ and since $\big(X_i \op \wit{X_i}, E_i\big)$ is Lipschitz regular. Letting $e_i(a)$ and $|e_i|(a)$ denote the bounded extensions of the graded commutators $[E_i,a]$ and $[|E_i|,a]\colon \operatorname{Dom}(E_i) \to X_i \op \wit{X_i}$ we moreover see that
\[
E_i [G_i,a] = G_i \cd e_i(a) + (-1)^{\deg (a) + 1} G_i \cd |e_i|(a) \cd G_i\colon X_i \op \wit{X_i} \to X_i \op \wit{X_i}
\]
is bounded. It thus follows from Proposition \ref{p:phaequ} and Theorem \ref{t:group} that $\big[X_i \op \wit{X_i}, \De_i^{-1} G_i\big] = \big[X_i \op \wit{X_i}, E_i\big]$ in $UK^{{\rm top}}(\s A,B)$ and this ends the proof of the theorem.
\end{proof}

\subsection*{Acknowledgements}
The starting point for this paper was a couple of conversations with Bram Mesland during the thematic programme on ``Bivariant $K$-theory in Geometry and Physics'' at the Erwin Schr\"odinger Institute in Vienna in November 2018. I would like to thank the ESI for their hospitality and support and the organizers of the thematic programme for this great opportunity to meet and discuss the unbounded approach to $KK$-theory and its applications in mathematical physics. As always, I am also grateful to my friends and collaborators Magnus Goffeng, Bram Mesland, and Adam Rennie for many good conversations on unbounded $KK$-theory and its relationship to $KK$-theory. Finally, I would like to thank the anonymous referee for her/his comments regarding spectral decomposability and its relationship to the work of Skandalis.

The author was partially supported by the DFF-Research Project 2 ``Automorphisms and Invariants of Operator Algebras'', no.~7014-00145B.

\pdfbookmark[1]{References}{ref}
\LastPageEnding

\end{document}